\def\p@enumii{\p@enumi\theenumi.}
\newcommand\restr[2]{{
  \left.\kern-\nulldelimiterspace 
  #1 
  \right|_{#2} 
  }}
\newcommand{\Ric}{\text{Ric}}
\newtheorem{theorem}{Theorem}[section]
\newtheorem{definition}{Definition}[theorem]
\title{Weighted GJMS operators on Smooth Metric Measure Spaces}
\author{Ayush Khaitan}
\address{316 Hill Center \\ Rutgers University \\ New Brunswick, NJ 08854 \\ USA}
\email{ayush.khaitan@rutgers.edu}
\begin{document}
\keywords{GJMS operator; ambient metric; Poincar\'e metric; smooth metric measure space}
\subjclass[2010]{Primary 53B20; Secondary 58J40, 53C18, 53A31}

\begin{abstract}
We construct weighted GJMS operators on smooth metric measure spaces, and prove that they are formally self-adjoint. We also provide factorization formulas for them in the case of quasi-Einstein spaces and under Gover--Leitner conditions. 
\end{abstract}

\maketitle
\section{Introduction}
Graham--Jenne--Mason--Sparling operators, commonly abbreviated as GJMS operators, are an important class of formally self-adjoint~\cites{GrahamZworski2003} conformally covariant operators~\cites{GJMS1992}. In this paper, we construct a weighted analogue of these operators for smooth metric measure spaces. 

A \emph{smooth metric measure space} is a five-tuple $(M^d,g,f,m,\mu)$, where $M^d$ is a Riemannian manifold of dimension $d$, $f$ a smooth function defined on $M$, $m\in\mathbb{R}$ a dimensional parameter, and $\mu\in\mathbb{R}_+$ an auxiliary curvature parameter~\cites{Case2014s,MR3415769}. If $m\in\mathbb{N}$, a smooth metric measure space may be thought of as the warped product $(M^d\times F^m(\mu),g\oplus f^2 h)$, where $(F^m(\mu),h)$ is the $m$-dimensional simply connected spaceform of constant curvature $\mu$~\cites{Case2014s}. 

The space of conformal densities of weight $w\in\mathbb{R}$ is denoted by $\mathcal{E}[w]$. Also, a pointwise conformal transformation of $(M^d,g,f,m,\mu)$ with respect to a smooth function $\sigma\in C^\infty(M)$ is the map $$(M^d,g,f,m,\mu)\mapsto (M^d,e^{2\sigma} g,e^\sigma f,m,\mu).$$  
{When $m>0$ we set $\phi:=-m\log f$ (and take $\phi=0$ if $m=0$) and define the weighted Laplacian $\Delta_\phi:=\Delta-\langle\nabla\phi,\nabla\cdot\rangle$.}

Weighted GJMS operators are known in orders two and four~\cites{Case2011t}, and were formally defined by Case and Chang~\cites{CaseChang2013} to study fractional GJMS operators~\cites{GrahamZworski2003} via a curved analogue of the Caffarelli--Silvestre extension~\cites{CaffarelliSilvestre2007,ChangGonzalez2011, ChangYang2017, Case2019tr}. In this paper, we give a rigorous definition of the weighted GJMS operators, and develop some of their properties.

Weighted GJMS operators are the canonical conformally invariant operators defined on smooth metric measure spaces, with a power of the laplacian as the leading term. On setting $m=0$, we recover the GJMS operators on Riemannian manifolds~\cites{GJMS1992}. On taking the limit $m\to\infty$, we recover~\cite{KHAITAN2026110787} Perelman's modified Bochner-Lichnerowicz formula~\cite{Perelman1}. 

The ambient metric is a key tool in defining weighted GJMS operators, and a weighted analogue of the ambient metric has recently been defined by Case and the author~\cites{CaseKhaitan2022}. By adapting the arguments in \cites{GJMS1992}, we construct the weighted GJMS operators. 

\begin{theorem}
\label{main-theorem}
If $d+m\notin 2\mathbb{N}$, then for each positive integer $k$ there is a conformally invariant operator $$L_{2k,\phi}^m:\mathcal{E}[-\frac{1}{2}(d+m)+k]\to \mathcal{E}[-\frac{1}{2}(d+m)-k],$$ with leading term $(\Delta_\phi)^k$. If $d+m\in 2\mathbb{N}$, the same result holds with the restriction $1\leq k\leq \frac{1}{2}(d+m)$.
\end{theorem}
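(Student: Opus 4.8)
The plan is to adapt the ambient-metric construction of \cite{GJMS1992} to the weighted setting, using the weighted ambient metric of \cite{CaseKhaitan2022} in place of the Fefferman--Graham ambient metric. First I would identify a conformal density in $\mathcal{E}[w]$ with a function on the weighted ambient space that is homogeneous of degree $w$ with respect to the defining dilations. The weighted ambient space carries a weighted (Bakry--\'Emery type) Laplacian $\tilde{\Delta}_\phi$ built from the weighted ambient metric and its associated density; the structural input, playing the role of Ricci-flatness in the classical construction, is the weighted Einstein/vanishing condition satisfied by the weighted ambient metric of \cite{CaseKhaitan2022}. Using this, I would show that $\tilde{\Delta}_\phi$ preserves homogeneity, sending functions homogeneous of degree $w$ to functions homogeneous of degree $w-2$.

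The operator $L_{2k,\phi}^m$ is then defined by the usual three-step recipe: given $u\in\mathcal{E}[-\tfrac12(d+m)+k]$, choose a homogeneous extension $\tilde u$ of degree $-\tfrac12(d+m)+k$ to the weighted ambient space, apply $(\tilde{\Delta}_\phi)^k$, and restrict the result to $M$. Since each application of $\tilde{\Delta}_\phi$ lowers the degree of homogeneity by $2$, the output is a density of weight $-\tfrac12(d+m)-k$, which gives the claimed mapping property. Conformal covariance is then essentially automatic: a pointwise conformal transformation $(g,f)\mapsto(e^{2\sigma}g,e^\sigma f)$ amounts only to a change of homogeneous representative along the cone, under which the ambient construction is unchanged, so the restricted operator transforms exactly by the conformal weights of the source and target density bundles. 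The identification of the leading term as $(\Delta_\phi)^k$ follows from the fact that, to leading order, $\tilde{\Delta}_\phi$ restricts on $M$ to the weighted Laplacian $\Delta_\phi$, so its $k$-th power has leading symbol $(\Delta_\phi)^k$.

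The crux, and the step where the dimensional hypothesis enters, is to show that the recipe is independent of the chosen homogeneous extension. Two admissible extensions of $u$ agree on the ambient cone and hence differ by a function vanishing to first order on it, so well-definedness reduces to an \emph{indicial computation}: one must check that, upon restriction to $M$, $(\tilde{\Delta}_\phi)^k$ annihilates every such error term. Computing the action of $\tilde{\Delta}_\phi$ on powers of a defining function for the cone produces indicial factors proportional to $j\bigl(\tfrac12(d+m)-k+j\bigr)$ for $1\le j\le k$, and the construction succeeds precisely when none of these vanish. Here $d+m$ plays the role of the effective dimension: if $d+m\notin 2\mathbb{N}$ (in particular whenever $m$ is chosen so that $d+m$ is non-integral) every factor is nonzero and all $k$ are admissible, whereas if $d+m\in 2\mathbb{N}$ the factor with $j=k-\tfrac12(d+m)$ vanishes once $k>\tfrac12(d+m)$, which both reflects the finite order to which the weighted ambient metric is determined and recovers the classical even-dimensional obstruction, forcing the restriction $1\le k\le\tfrac12(d+m)$. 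I expect this indicial analysis, together with verifying that the weighted ambient metric of \cite{CaseKhaitan2022} is determined to high enough order for the given $k$, to be the main technical obstacle; the remaining homogeneity and covariance bookkeeping is routine once the ambient framework is set up.
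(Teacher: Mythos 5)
Your overall architecture is the paper's: extend a density in $\mathcal{E}[w]$, $w=-\tfrac12(d+m)+k$, homogeneously to the straight and normal weighted ambient space of \cite{CaseKhaitan2022}, apply $\widetilde{\Delta}_\phi^k$, and restrict to the cone; the mapping property, conformal invariance, and leading term $(\Delta_\phi)^k$ are handled exactly as you describe. However, the step you yourself single out as the crux is misdescribed, in two ways. First, extension-independence is not a matter of avoiding a family of indicial roots $j\bigl(\tfrac12(d+m)-k+j\bigr)$, $1\le j\le k$: two extensions differ by $QH$ with $H$ homogeneous of degree $w-2$, and the single $\mathfrak{sl}_2$-type commutator identity $[\widetilde{\Delta}_\phi^k,Q]=4k\,\widetilde{\Delta}_\phi^{k-1}\bigl(X+\tfrac12(d+m)+1-k\bigr)$ shows that the correction term carries the factor $w+\tfrac12(d+m)-k$, which is \emph{identically zero} at the weight in question. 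So the construction works because one factor vanishes, not because all factors are nonzero, and this step imposes no condition on $d+m$ whatsoever. The factors you quote do not arise from this computation (the indicial factors that do appear, $4j(k-j)$ in the harmonic-extension problem and $j(2k-j)$ in the Poincar\'e picture, vanish at $j=k$ resp.\ $j=2k$ and are independent of $d+m$). If you carried out your computation honestly you would find no obstruction at any $k$, and your proof would then fail to explain the hypothesis in the theorem.

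Second, the dimensional restriction $k\le\tfrac12(d+m)$ for $d+m\in2\mathbb{N}$ enters entirely through the finite order of determinacy of the weighted ambient metric, which you mention only as a side condition to be ``verified.'' Writing $\widetilde{\Delta}_\phi$ in normal form (the paper's formula for $\widetilde{\Delta}_\phi(t^w\psi)$), the iterated restriction to $\rho=0$ involves up to $k-1$ derivatives $\partial_\rho^l g_\rho$ and up to $k$ derivatives of the trace quantity $\tfrac12 g^{ij}g'_{ij}+\tfrac{m}{f}f'$; since $(g_\rho,f_\rho)$ is determined only modulo $O(\rho^{(d+m-2)/2})$ and the trace modulo one order higher when $d+m\in2\mathbb{N}$, the operator depends only on $(g,f)$ precisely when $k\le\tfrac12(d+m)$. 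This is the actual content of the hypothesis, and it requires the quantitative uniqueness statement from \cite{CaseKhaitan2022}, not an indicial analysis. You need to replace your indicial-root argument with the commutator identity above and with this derivative count.
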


Graham--Zworski proved the formal self-adjointness of GJMS operators~\cites{GrahamZworski2003}. Other proofs for formal self-adjointness are also known~\cites{FeffermanGraham2002,FeffermanGraham2013,Juhl2013}. In this article, we prove that weighted GJMS operators are formally self-adjoint, closely following the method used in~\cites{FeffermanGraham2002}.
\begin{theorem}
\label{self-adjoint-theorem}
$L_{2k,\phi}^m$ is a formally self-adjoint operator. 
\end{theorem}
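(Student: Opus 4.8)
The plan is to deduce formal self-adjointness from the ambient realization of $L_{2k,\phi}^m$ used in the proof of Theorem~\ref{main-theorem}, by reducing it to the (weighted) self-adjointness of the ambient Laplacian. Write $n=d+m$ and $w=-\tfrac12 n+k$, so that $L_{2k,\phi}^m:\mathcal E[w]\to\mathcal E[w-2k]$ and the source and target weights satisfy $w+(w-2k)=-n$. Because the product of two weighted conformal densities whose weights sum to $-n$ is a density of weight $-n$, which is canonically integrable over $M$ against the weighted measure $e^{-\phi}\,dv_g$, the bilinear pairing $\langle u,v\rangle=\int_M uv\,e^{-\phi}\,dv_g$ is well defined for $u\in\mathcal E[w]$ and $v\in\mathcal E[w-2k]$. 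Formal self-adjointness is then the statement that $\langle u,L_{2k,\phi}^m v\rangle=\langle L_{2k,\phi}^m u,v\rangle$ for all $u,v\in\mathcal E[w]$, and since the pairing is symmetric it suffices to establish this single identity.

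First I would recall the construction: a density $u\in\mathcal E[w]$ is promoted to a function $\tilde u$ on the weighted ambient space of \cite{CaseKhaitan2022}, homogeneous of degree $w$ and harmonic for the weighted ambient Laplacian $\tilde\Delta_\phi$ to the order required by the GJMS recursion, and $L_{2k,\phi}^m u=\tilde\Delta_\phi^k\tilde u\big|_{M}$ up to the universal normalizing constant. The crucial structural input, which the weighted ambient metric is built to provide, is that $\tilde\Delta_\phi$ is formally self-adjoint with respect to the weighted ambient volume element. This is nothing but the higher-dimensional incarnation of the elementary fact that $\Delta_\phi$ is self-adjoint with respect to $e^{-\phi}\,dv_g$, and it is exactly what absorbs the drift terms carried by $\phi$ into the measure, so that no new first-order contributions obstruct the argument.

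Next I would convert the pairing into an integral to which this self-adjointness applies. The product $\tilde u\,\tilde\Delta_\phi^k\tilde v$ is homogeneous of the critical degree $-n$, so its restriction to $M$ is a weight $-n$ density whose integral reproduces $\langle u,L_{2k,\phi}^m v\rangle$. Lifting this to the weighted ambient space---most transparently after passing to the weighted Poincar\'e metric, on which the homogeneity degree $w$ turns $\tilde\Delta_\phi$ into a shift $\Delta_{\phi,+}+w(n+w)$ of the weighted Laplacian of the weighted Poincar\'e metric---the operator $\tilde\Delta_\phi^k$ becomes a product of $k$ shifted weighted Laplacians, each of which is manifestly self-adjoint. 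Applying the weighted Green identity repeatedly then transfers the $k$ factors of $\tilde\Delta_\phi$ from $\tilde v$ onto $\tilde u$ one at a time, and the symmetric identity $\langle u,L_{2k,\phi}^m v\rangle=\langle v,L_{2k,\phi}^m u\rangle$ follows provided every boundary term produced along the way vanishes.

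The main obstacle is precisely the control of these boundary terms together with the ambiguity in the harmonic extension. Two points must be checked. First, the divergence terms generated at each integration by parts should cancel because $\tilde u$ and $\tilde\Delta_\phi^k\tilde v$ carry complementary homogeneities summing to $-n$, rendering the boundary integrand scale invariant so that the contributions from the dilation direction cancel; in the Poincar\'e picture this is the statement that the Dirichlet-to-Neumann data pair symmetrically, the familiar mechanism behind scattering-theoretic proofs of self-adjointness \cite{GrahamZworski2003}. Second, at the critical weight $w=-\tfrac12 n+k$ the restriction $\tilde\Delta_\phi^k\tilde u\big|_{M}$ is independent of the chosen extension only because the ambiguity is proportional to the defining function and hence invisible to the pairing; I would verify that the extensions can be taken harmonic to an order high enough that all such ambiguous terms, and all boundary terms, lie beyond what the weight $-n$ integral detects. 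In the even case $d+m\in 2\mathbb N$ the weighted ambient metric is determined only to finite order and logarithmic terms appear at order $\tfrac12(d+m)$, which is exactly the reason the construction, and therefore this argument, is confined to $1\le k\le\tfrac12(d+m)$; for $k$ in this range the requisite harmonicity is available and the argument closes.
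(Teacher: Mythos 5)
Your overall strategy---realize $L_{2k,\phi}^m$ on the weighted Poincar\'e space and obtain symmetry from a Green's identity---is the same circle of ideas the paper uses, but as written your plan has a genuine gap at exactly the point you flag as ``the main obstacle.'' The boundary terms produced by integration by parts do \emph{not} cancel by scale invariance; scale invariance of the degree $-(d+m)$ integrand is precisely what makes the regularized integral logarithmically divergent, and the whole content of the proof is the computation of the coefficient of $\log\epsilon$. Concretely, the paper applies Green's identity once to the manifestly symmetric Dirichlet energy $\int_{\epsilon<r<r_0}\left[\langle du_1,du_2\rangle-s(d+m-s)u_1u_2\right](dv_\phi^m)_{g_+}$, reduces it to the boundary integral $-\epsilon^{1-d-m}\oint_{r=\epsilon}u_1\partial_r u_2\,(dv_\phi^m)_{g_r}+O(1)$, and extracts the $\log\epsilon$ coefficient. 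Note that the result, \cref{21}, carries the \emph{asymmetric} coefficients $\frac{d+m}{2}+k$ and $\frac{d+m}{2}-k$ in front of $\int v_1L_{2k,\phi}^m v_2$ and $\int v_2 L_{2k,\phi}^m v_1$: the boundary pairing is not symmetric term by term, and self-adjointness is forced only by equating this expression with its transpose, the difference of the two coefficients being $2k\neq 0$. Your proposed repeated integration by parts, with each boundary contribution discarded as ``cancelling,'' would not detect this structure.

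The second missing ingredient is the input that makes the log-coefficient computation possible at all: the identification of $L_{2k,\phi}^m$ (up to the constant $d_k=[2^{2k-1}k!(k-1)!]^{-1}$) as the coefficient of $r^{(d+m)/2+k}\log r$ in the formal solution of $(\Delta_{\phi_+}-s(d+m-s))u=O(r^{2k}\log r)$ with $s=\frac{d+m}{2}+k$. This is the paper's \cref{differential-equation-solution}, proved by running the indicial recursion for the operator $\mathcal{D}_s$ (which produces the log term exactly at order $2k$) and then matching the Poincar\'e-space extension problem with the ambient one via $\widetilde g=-dv^2+v^2g_+$. You allude to the homogeneity shift $\widetilde\Delta_\phi=v^{-2}[\Delta_{\phi_+}-s(d+m-s)]$ but never establish that the obstruction to a smooth solution is $d_kL_{2k,\phi}^m v$, which is the quantity whose symmetry is actually being proved. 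Without that identification, and without the $\log\epsilon$ regularization replacing your ``boundary terms cancel'' step, the argument does not close.
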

Our proof of formal self-adjointness follows the method of~\cites{FeffermanGraham2002}.

GJMS operators factor nicely for Einstein metrics~\cites{Gover2006q,FeffermanGraham2012}. Moreover, Case and Chang~\cites{CaseChang2013} have also formally proved a factorization of these operators for products of negatively-curved Einstein manifolds with positively-curved Einstein manifolds, using the explicit ambient metric for such spaces found by Gover and Leitner~\cites{GoverLeitner2008}. In this paper, we provide factorization formulas of GJMS operators for quasi-Einstein spaces~\cites{Case2014s} and under Gover--Leitner conditions~\cites{GoverLeitner2008}. Before we state our result, we provide the relevant definitions and formulas. 

Let $(M^d,g,f,m,\mu)$ be a smooth metric measure space. When $m>0$, we set $\phi:=-m\log f$, so that
\[ dv_\phi = e^{-\phi}\,\operatorname{dvol}_g . \] {We define the $m$-weighted Bakry--Emery Ricci tensor and the associated weighted scalar curvature by}
\begin{align*}
\Ric_\phi^m &= \Ric + \nabla^2\phi - \frac{1}{m} d\phi \otimes d\phi,\\
R_\phi^m &= R + 2\Delta\phi - \frac{m+1}{m}|\nabla\phi|^2 + m(m-1)\mu e^{2\phi/m} .
\end{align*}
{When $m\in\mathbb{N}$, these coincide with the Ricci tensor and scalar curvature of the warped product $(M^d\times F^m(\mu),g\oplus f^2 h)$ restricted to $M$, so they are the natural curvature quantities in this setting. For brevity, we suppress the dependence of $R_\phi^m$ on $\mu$ in the notation.}
{We also use the weighted scalar function \[F_\phi^m:=f\Delta f+(m-1)(|\nabla f|^2-\mu).\]}
Recall that \emph{weighted Schouten tensor $P_\phi^m$} and the \emph{weighted Schouten scalar $J_\phi^m$} of $(M^d,g,f,m,\mu)$ are
\begin{align*}
    P_\phi^m & := \frac{1}{d+m-2}(\Ric_\phi^m - J_\phi^m g), \\
    J_\phi^m & := \frac{1}{2(d+m-1)}R_\phi^m .
\end{align*}
A \emph{quasi-Einstein space}~\cites{Case2014s} is a smooth metric measure space such that for some $\lambda\in\mathbb{R}$, \[P_\phi^m=\lambda g, \quad J_\phi^m=(d+m)\lambda.\] In this paper, we prove a factorization formula of the weighted GJMS operator for quasi-Einstein spaces.
\begin{theorem}
\label{quasi-einstein-theorem}
The weighted GJMS operator
\begin{equation*}L_{2k,\phi}^m: \mathcal{E}\left[-\frac{d+m}{2}+k\right] \rightarrow \mathcal{E}\left[-\frac{d+m}{2}-k\right]\end{equation*}
can be factorized as
\begin{equation*}
\prod_{l=0}^{k-1}\left[\Delta_\phi+2\lambda\left(-\frac{d+m}{2}+k-2l\right)\left(\frac{d+m}{2}+k-1-2l\right)\right]
\end{equation*}
for quasi-Einstein spaces.
\end{theorem}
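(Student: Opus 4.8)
The plan is to deduce \cref{quasi-einstein-theorem} from the classical factorization of GJMS operators on Einstein manifolds, exploiting the fact that a quasi-Einstein smooth metric measure space is the dimensional reduction of a genuine Einstein manifold. Write $n=d+m$. The first step is to record that, for $m\in\mathbb{N}$, the conditions $P_\phi^m=\lambda g$ and $J_\phi^m=n\lambda$ say precisely that the warped product $\bar g=g\oplus f^2 h$ on $\bar M=M^d\times F^m(\mu)$ is Einstein of dimension $n$, with Schouten tensor $\lambda\,\bar g$, Schouten scalar $n\lambda$, and hence $\Ric(\bar g)=2(n-1)\lambda\,\bar g$. This is implicit in the warped-product description of the introduction and in the formulas for $\Ric_\phi^m$ and $R_\phi^m$ above, and I would state it as a preliminary observation citing \cite{Case2014s}.

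Next I would identify $L_{2k,\phi}^m$, as constructed here from the weighted ambient metric, with the ordinary GJMS operator $P_{2k}^{\bar g}$ of $(\bar M,\bar g)$ acting on functions pulled back from $M$, i.e.\ those constant along the fiber $F^m(\mu)$. This is the crux of the argument: the weighted ambient metric of \cite{CaseKhaitan2022} is designed so that, for $m\in\mathbb{N}$, it agrees with the Fefferman--Graham ambient metric of the warped product on the fiber-invariant sector, and the weighted Laplacian $\Delta_\phi$ is exactly the restriction of $\Delta_{\bar g}$ to fiber-constant functions, with the drift term encoding the differentiation across the warped fiber. Under this identification the weight $-\tfrac12(d+m)+k$ is the critical GJMS weight $-\tfrac n2+k$ on $\bar M$, and the leading term $(\Delta_\phi)^k$ is the restriction of $\Delta_{\bar g}^k$.

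With the identification in hand I would invoke the Einstein factorization \cite{Gover2006q,FeffermanGraham2012}: on $(\bar M^n,\bar g)$ with Schouten tensor $\lambda\,\bar g$,
\[
P_{2k}^{\bar g}=\prod_{i=1}^{k}\left(\Delta_{\bar g}+2\lambda\left(\tfrac n2+i-1\right)\left(i-\tfrac n2\right)\right).
\]
Restricting every factor to fiber-constant functions replaces $\Delta_{\bar g}$ by $\Delta_\phi$ and $n$ by $d+m$, and it remains to match this product with the one displayed in the statement. This last point is pure bookkeeping, powered by the symmetry of $x\mapsto(x-\tfrac n2)(x+\tfrac n2-1)$ under $x\mapsto 1-x$: one checks that the map $a\mapsto a$ for $a\ge 1$ and $a\mapsto 1-a$ otherwise is a bijection from $\{k-2l:0\le l\le k-1\}$ onto $\{1,\dots,k\}$, so the multiset of shifts in the theorem coincides with that of the Einstein factorization.

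The warped-product realization forces $m\in\mathbb{N}$, whereas the theorem allows $m\in\mathbb{R}$, so the final step is analytic continuation: the weighted ambient metric, and hence every coefficient of $L_{2k,\phi}^m$ and of the proposed product, is a universal rational function of $m$, and agreement for all admissible integer values of $m$ forces agreement identically in $m$. I expect the main obstacle to be the compatibility in the second paragraph — proving that the weighted ambient construction of $L_{2k,\phi}^m$ really is the fiber reduction of $P_{2k}^{\bar g}$, together with the matching of curvature normalizations. An alternative that sidesteps $m\in\mathbb{N}$ is to argue entirely within the weighted ambient space: for quasi-Einstein spaces the weighted ambient metric solves its defining equations in closed form, with warping depending only on the ambient defining function, so that $\tilde\Delta_\phi^{\,k}$ separates and, applied to the homogeneous extension, yields a radial recursion whose indicial roots — spaced by $2$ and shifted by $\lambda$ — produce the factors directly; the cost there is establishing the explicit normal form of the weighted ambient metric as a separate lemma.
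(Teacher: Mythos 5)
Your main route --- realizing a quasi-Einstein space as the base of an Einstein warped product $(\bar M^{d+m},\bar g)$, identifying $L_{2k,\phi}^m$ with the restriction of $P_{2k}^{\bar g}$ to fiber-constant functions, quoting the Einstein factorization of \cite{Gover2006q,FeffermanGraham2012}, and continuing in $m$ --- is genuinely different from the paper's proof, and as written it has two real gaps. First, the identification $L_{2k,\phi}^m=P_{2k}^{\bar g}\big|_{\text{fiber-constant}}$ is the entire content of the argument and is nowhere established: it requires relating the $(d+2)$-dimensional weighted ambient space to the $(d+m+2)$-dimensional Fefferman--Graham ambient space of $\bar M$ and showing that $\widetilde{\Delta}_\phi$ is the reduction of the ambient Laplacian on fiber-invariant homogeneous functions, with the obstruction orders matching when $d+m\in 2\mathbb{N}$. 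You correctly flag this as the main obstacle, but flagging it does not discharge it. Second, the analytic continuation is more delicate than one sentence: for a fixed $(M^d,g,f,\mu)$ you cannot vary $m$ while preserving the quasi-Einstein condition, so the continuation must be phrased as an identity between universal natural operators, rational in $m$, evaluated on the $m$-dependent quasi-Einstein locus in jet space. To know that $L_{2k,\phi}^m$ restricted to that locus is a polynomial in $\Delta_\phi$ and $\lambda$ with coefficients rational in $d+m$ --- which is what makes ``agreement at infinitely many integers'' conclusive --- you essentially already need the explicit closed form of the weighted ambient metric, at which point the detour through the warped product buys nothing. (Your bookkeeping matching the multiset $\{k-2l\}_{l=0}^{k-1}$ with $\{1,\dots,k\}$ via $a\mapsto a$ or $1-a$ is correct.)

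The alternative you mention only in your closing sentence is in fact the paper's proof, and it is shorter than you anticipate because the ``separate lemma'' you worry about is already available: \cite[Section 7]{CaseKhaitan2022} gives the weighted ambient metric of a quasi-Einstein space in the closed form $g_\rho=(1+\lambda\rho)^2g$, $f_\rho=(1+\lambda\rho)f$. The paper then takes the explicit extension $\widetilde{\psi}=t^w(1+\lambda\rho)^w\psi$, computes from \cref{tildelaplacian} that
\begin{equation*}
\widetilde{\Delta}_\phi\widetilde{\psi}=t^{w-2}(1+\lambda\rho)^{w-2}\bigl[\Delta_\phi+2w\lambda(w+d+m-1)\bigr]\psi,
\end{equation*}
iterates by induction (the weight drops by $2$ at each step, which is exactly where the shifts $w-2l$ come from), and restricts to $\rho=0$, using the independence of $\restr{\widetilde{\Delta}_\phi^k\widetilde{\psi}}{\mathcal{G}}$ from the choice of extension. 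This works for all real $m$ at once, needs no continuation, and never leaves the weighted ambient space. I would recommend you rewrite your argument along these lines, or else supply full proofs of the reduction lemma and a careful formulation of the polynomial-continuation step.
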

\emph{Weighted Gover--Leitner conditions} are a generalization to $m\notin \mathbb{N}_0$ of the Gover--Leitner conditions defined in~\cites{GoverLeitner2009}, and are defined as
\begin{equation*}
f(x)=1\quad
\mu=1,\quad
(\text{Ric}_\phi^m)_{ij}=(-d+1)g.
\end{equation*} {Since $f\equiv 1$ here, we have $\phi=0$, hence 
\begin{align*}
\Ric_\phi^m=\Ric,\quad \Delta_\phi=\Delta,\quad R_\phi^m&=R+m(m-1).
\end{align*}}
In this paper, we also prove a factorization formula of the weighted GJMS operator for smooth metric measure spaces under weighted Gover--Leitner conditions. We thus provide a rigorous proof of the factorization formula constructed for Poincar\'e-Einstein spaces by Case and Chang~\cites{CaseChang2013}. {That is, we obtain a canonical construction and factorization of the operators by passing to an ambient space, instead of introducing them ad hoc.}
\begin{theorem}
\label{gover-leitner-theorem}
The weighted GJMS operator
\begin{equation*}L_{2k,\phi}^m: \mathcal{E}\left[-\frac{d+m}{2}+k\right] \rightarrow \mathcal{E}\left[-\frac{d+m}{2}-k\right]\end{equation*}
can be factorized as
\begin{equation*}
L_{2k,\phi}^m =\prod_{j=0}^{k-1}\left[\Delta+\frac{(2 k-4 j-d-m)(2-d+m-2 k+4 l)}{4}\right]
\end{equation*}
under Gover--Leitner conditions.
\end{theorem}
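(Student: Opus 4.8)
The plan is to exploit the fact that the Gover--Leitner conditions collapse the weighted structure onto an explicit model, compute the weighted ambient metric there, and then factor the $k$-th power of the ambient Laplacian directly. First I would record the immediate simplifications: since $f\equiv 1$ we have $\phi=-m\log f\equiv 0$, so $dv_\phi=\operatorname{dvol}_g$ and $\Delta_\phi=\Delta$, which already explains why the claimed factorization is written in terms of $\Delta$ rather than $\Delta_\phi$. Substituting $\phi=0$ and $\mu=1$ into the curvature identities gives $\Ric_\phi^m=\Ric=-(d-1)g$, hence $R=-d(d-1)$ and $R_\phi^m=-d(d-1)+m(m-1)$, from which $J_\phi^m=\frac{m-d}{2}$ and $P_\phi^m=-\frac12 g$. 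Thus $P_\phi^m$ is pure trace but $J_\phi^m\neq(d+m)\lambda$ for $\lambda=-\frac12$, so the space is \emph{not} quasi-Einstein; the argument must therefore run parallel to, but independently of, \cref{quasi-einstein-theorem}. Geometrically, for integer $m$ the underlying warped product is the Riemannian product $M^d\times S^m$ of a negative-Einstein factor with a positive-Einstein factor, which is precisely the Einstein-product situation for which Gover and Leitner exhibited an explicit ambient metric; the weighted ambient metric of Case--Khaitan is the analytic continuation of this construction to non-integer $m$.

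Next I would write down this explicit weighted ambient metric and use it to compute the weighted ambient Laplacian $\tilde\Delta$. The construction underlying \cref{main-theorem} represents $L_{2k,\phi}^m u$ as the restriction of $\tilde\Delta^k\tilde u$, where $\tilde u$ is a homogeneous extension of $u\in\mathcal{E}[-\frac{d+m}{2}+k]$ to the weighted ambient space. On the explicit model $\tilde\Delta$ splits into the base Laplacian $\Delta$ plus operators involving only the scaling coordinate and the defining/fiber coordinate; on each homogeneity layer these latter operators act by an explicit scalar fixed by the degree of homogeneity and the indicial roots $s,\ (d+m)-s$ attached to the weight. This reduces $\tilde\Delta$, restricted to the relevant layer, to $\Delta$ plus a constant.

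Finally I would run the induction: because each application of $\tilde\Delta$ shifts the effective homogeneity by two, iterating $k$ times telescopes into a product $\prod_{j=0}^{k-1}(\Delta+c_j)$, and I would compute the constants $c_j$ from the indicial data and match them to $\frac14(2k-4j-d-m)(2-d+m-2k+4j)$ (reading the product index consistently as $j$). The main obstacle is this last computation: correctly evaluating the action of the weighted ambient Laplacian in the explicit Gover--Leitner model and verifying that the telescoping yields \emph{exactly} the stated constants. The delicate points are that $m$ must be carried as a continuous parameter, so the $S^m$ factor is handled through the formal weighted calculus rather than genuine spherical harmonics, and that one must confirm the weight $-\frac{d+m}{2}+k$ is non-resonant---guaranteed by the hypothesis $d+m\notin 2\mathbb{N}$, or $1\le k\le\frac12(d+m)$---so that all intermediate homogeneous extensions exist and the factorization is unobstructed.
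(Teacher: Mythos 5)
Your proposal follows essentially the same route as the paper: under $f\equiv 1$ one has $\phi=0$, and the proof proceeds by writing down the explicit weighted ambient metric for the Gover--Leitner model (the paper takes $g(x,\rho)=(1-\rho/2)^2g(x)$, $f(x,\rho)=1+\rho/2$ from Case--Khaitan, Section 7), choosing the explicit homogeneous extension $\widetilde\psi=t^w(1-\rho/2)^w\psi(x)$ so that $\widetilde\Delta_\phi$ acts on each layer as $\Delta$ plus a scalar, and inducting on $k$ to telescope into the stated product. The only remaining work in your plan --- evaluating the constants $c_j$ --- is exactly the computation the paper performs via its formula for $\widetilde\Delta_\phi$ on the normal-form ambient metric, and your reading of the index typo ($l$ versus $j$) is the correct one.
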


This article is organized as follows: in \cref{sec:twas}, we discuss some properties of the weighted ambient space and the commutation relations satisfied by the differential operators relevant to this article. In \cref{sec:wcio}, we derive weighted GJMS operators with the help of the weighted ambient space. In \cref{sec:cwgo}, we show that weighted GJMS operators have a power of the weighted Laplacian as the leading part. In \cref{sec:ff}, we derive the factorization formulas for weighted GJMS operators in the case of quasi-Einstein spaces, and also under Gover--Leitner conditions. In \cref{sec:sa}, we prove that weighted GJMS operators are formally self-adjoint. 

\subsection*{Acknowledgements}
I would like to thank Professor Jeffrey S. Case for suggesting this problem, many helpful discussions, and for comments on multiple drafts of the paper. I would also like to thank the anonymous referees for making several helpful suggestions that have substantially improved the paper.

\section{The weighted ambient space}
\label{sec:twas}
For a smooth metric measure space $(M^d,g,f,m,\mu)$, consider the $(d+2)$-dimensional space $\mathbb{R}_+\times M\times \mathbb{R}$ with coordinates $(t,x,\rho)$. Then the corresponding \emph{straight and normal weighted ambient space} is
\begin{equation}
\begin{aligned}
\label{normalmetric}
\widetilde{g}& :=t^2 g_\rho+2\rho dt^2+2tdtd\rho,\\
\widetilde{f}& :=tf_\rho,
\end{aligned}
\end{equation} such that $\widetilde{\text{Ric}}_\phi^m,\widetilde{F_\phi^m}=O(\rho^j)$ and $\mathrm{tr}\,\widetilde{\text{Ric}}_\phi^m-mf^{-2}\widetilde{F_\phi^m}=O(\rho^{j+1})$, where $j=(d+m-2)/2$ or $\infty$ depending on whether $d+m\in 2\mathbb{N}$ or $d+m\notin 2\mathbb{N}$. {Here $\widetilde{F_\phi^m}:=\widetilde{f}\widetilde{\Delta}\widetilde{f}+(m-1)(|\widetilde{\nabla}\widetilde{f}|^2-\mu)$.} The existence, and the uniqueness of $(\widetilde{g},\widetilde{f})$ up to order $O(\rho^j)$ and of $[(1/2)g^{kl}(g_\rho)_{kl}+(m/f)(f_\rho)]$ up to order $O(\rho^{j+1})$, has been proven in \cites{CaseKhaitan2022}. {Here $[(1/2)g^{kl}(g_\rho)_{kl}+(m/f)(f_\rho)]$ may be thought of as the weighted version of the trace of $g_\rho$. This is analogous to how the trace of $g_\rho$ is uniquely determined to one higher order in \cites{FeffermanGraham2012}.} We denote $\mathbb{R}_+\times M\times \mathbb{R}$ as $\widetilde{\mathcal{G}}$, the position vector on $\widetilde{\mathcal{G}}$ as $X^I$, and $\restr{\widetilde{\mathcal{G}}}{\rho=0}$ as $\mathcal{G}$.

\subsection{Commutation relations}
\cref{normalmetric} implies that
\begin{equation}
\label{gjms4}
\widetilde{\nabla}_IX_J=\widetilde{g}_{IJ}.
\end{equation} Note that \cref{gjms4} can also be deduced from the fact that \cref{normalmetric} is a straight weighted ambient metric; the definition of a straight weighted ambient metric is given in~\cites{CaseKhaitan2022}.
From \cref{gjms4}, we get that $\widetilde{\nabla}_K\widetilde{\nabla}_J X_I-\widetilde{\nabla}_J\widetilde{\nabla}_K X_I=0$. Hence,  
\begin{equation}
\label{gjms5}
\widetilde{R}_{LKJI}X^L=0.
\end{equation}
Now set $Q=X^IX_I$. From \cref{normalmetric}, we conclude that it is a defining function for $\mathcal{G}$. From \cref{gjms4}, we compute that 
\begin{equation}
\label{gjms6}
\widetilde{\nabla}_I Q=2X_I.
\end{equation}

Let us now define the following operators acting on functions on $\widetilde{\mathcal{G}}$.
\begin{align*}
\begin{split}
x=-\frac{1}{4}Q,\quad  y=\widetilde{\Delta}_\phi,\quad 
h=X+\frac{1}{2}(d+m+2).
\end{split}
\end{align*}
{Here $\widetilde{\Delta}_\phi$ is the weighted Laplacian, $\widetilde{\Delta}_\phi:=\widetilde{\Delta}-\widetilde{\nabla}_{\widetilde{\nabla}\phi}$.} Note that the degree of homogeneity of $f$ with respect to $t$ is $1$, and $X=t\partial_t$. 
\begin{theorem}
The operators $x,y,h$ satisfy the commutation relations
\begin{equation*}
[x,y]=h,\quad 
[h,x]=2x,\quad 
[h,y]=-2y.
\end{equation*}
\end{theorem}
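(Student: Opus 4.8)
The plan is to verify the three brackets by direct computation on functions on $\widetilde{\mathcal{G}}$, using only the structural identities \cref{gjms4}, \cref{gjms5} and \cref{gjms6} together with the homogeneity of $\widetilde f$. Throughout I regard the Euler operator as $X=X^I\widetilde\nabla_I$, so that $Xu=X^I\widetilde\nabla_I u$ on functions, and I write the weighted Laplacian as $\widetilde\Delta_\phi u=\widetilde\Delta u-\widetilde\nabla^I\phi\,\widetilde\nabla_I u$. Since $h$ differs from $X$ by the constant $\tfrac12(d+m+2)$, every bracket involving $h$ reduces to one involving $X$.

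The relation $[h,x]=2x$ is immediate: as $x$ is multiplication by $-\tfrac14 Q$, one has $[h,x]=[X,-\tfrac14 Q]=-\tfrac14 X(Q)$, and \cref{gjms6} gives $X(Q)=X^I\widetilde\nabla_I Q=2X^I X_I=2Q$, whence $[h,x]=-\tfrac12 Q=2x$. For $[x,y]=h$ I would expand the commutator of multiplication by $Q$ with the second-order operator $\widetilde\Delta_\phi$. The product rule yields $[\widetilde\Delta_\phi,Q]=(\widetilde\Delta Q)+2\widetilde\nabla^I Q\,\widetilde\nabla_I-(\widetilde\nabla^I\phi)(\widetilde\nabla_I Q)$, so that $[x,y]=\tfrac14[\widetilde\Delta_\phi,Q]$. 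I then evaluate each piece via \cref{gjms4} and \cref{gjms6}: from $\widetilde\nabla_I Q=2X_I$ I get $\widetilde\Delta Q=2\widetilde\nabla^I X_I=2\widetilde g^{IJ}\widetilde g_{IJ}=2(d+2)$ and $2\widetilde\nabla^I Q\,\widetilde\nabla_I=4X$, while the drift term is $(\widetilde\nabla^I\phi)(\widetilde\nabla_I Q)=2X^I\widetilde\nabla_I\phi=2X(\phi)$. Since $\widetilde f=tf_\rho$ is homogeneous of degree one in $t$, Euler's identity gives $X\widetilde f=\widetilde f$ and hence $X(\phi)=-mX(\log\widetilde f)=-m$. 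Collecting terms, $[x,y]=\tfrac14\bigl(4X+2(d+2)+2m\bigr)=X+\tfrac12(d+m+2)=h$.

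The relation $[h,y]=-2y$, equivalently $[X,\widetilde\Delta_\phi]=-2\widetilde\Delta_\phi$, is the main obstacle, since it is where the curvature and the weighted term must be handled with care. I would split $\widetilde\Delta_\phi=\widetilde\Delta-V$ with $V=(\widetilde\nabla^I\phi)\widetilde\nabla_I$. For the unweighted part, commuting $X=X^I\widetilde\nabla_I$ past $\widetilde\Delta$ and repeatedly using $\widetilde\nabla_I X^J=\delta_I^J$ (from \cref{gjms4}) produces $[X,\widetilde\Delta]=-2\widetilde\Delta-X^K\widetilde g^{IJ}\widetilde R_{KIJ}{}^L\widetilde\nabla_L$; the curvature term vanishes because $X^K\widetilde R_{KIJL}=0$ by \cref{gjms5}, leaving $[X,\widetilde\Delta]=-2\widetilde\Delta$. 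For the drift part, the same bookkeeping gives $[X,V]=X^K(\widetilde\nabla_K\widetilde\nabla^I\phi)\widetilde\nabla_I-V$, so I must show $X^K\widetilde\nabla_K\widetilde\nabla_I\phi=-\widetilde\nabla_I\phi$. This follows by differentiating $X(\phi)=-m$: since $\widetilde\nabla_I(X^K\widetilde\nabla_K\phi)=\widetilde\nabla_I\phi+X^K\widetilde\nabla_I\widetilde\nabla_K\phi$ and the left-hand side vanishes, Hessian symmetry yields the claim, giving $[X,V]=-2V$. Hence $[X,\widetilde\Delta_\phi]=-2(\widetilde\Delta-V)=-2y$.

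The delicate inputs are therefore the vanishing of the curvature contraction through \cref{gjms5} and the homogeneity fact $X(\phi)=-m$ coming from the degree-one scaling of $\widetilde f$; once these are in place, the remaining manipulations are routine index computations with the Leibniz rule. I would present $[h,x]=2x$ and $[x,y]=h$ first, as they are short, and devote the bulk of the argument to $[h,y]=-2y$.
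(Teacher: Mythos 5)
Your proposal is correct and follows essentially the same route as the paper: all three brackets are verified by direct computation from \cref{gjms4}, \cref{gjms5}, and \cref{gjms6}, with the trace $\widetilde g^{IJ}\widetilde g_{IJ}=d+2$ and the homogeneity fact $X(\phi)=-m$ supplying the $\tfrac12(d+m+2)$ shift, and \cref{gjms5} killing the curvature term in $[X,\widetilde\Delta]$. You merely write out in fuller detail the steps the paper compresses (e.g.\ your $[X,V]=-2V$ computation is the paper's $[\widetilde\nabla\phi,X]=2\widetilde\nabla\phi$ up to sign), so no substantive difference.
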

\begin{proof}
First, we prove that $[x,y]=h$. Let $F$ be a function defined on $\widetilde{\mathcal{G}}$. Using \cref{gjms4}, \cref{gjms6} and the fact that $\widetilde{g}^{IJ}\widetilde{g}_{IJ}=d+2$, we get $$[x,y]F=[X+\frac{1}{2}(d+2)](F)-\frac{1}{4}(\widetilde{\nabla}^I\phi\widetilde{\nabla}_IQ)F.$$ As $\widetilde{\nabla}^I\phi\widetilde{\nabla}_I Q=-2\frac{m}{\widetilde{f}}\widetilde{\nabla}_X \widetilde{f}=-2m$, we have $-\frac{1}{4}(\widetilde{\nabla}^I\phi\widetilde{\nabla}_IQ)F=\frac{1}{2}mF$. Consequently, $[x,y]=h$.

Second, we prove that $[h,x]=2x$. This follows from the fact that $XQ=2Q$, as $Q$ is homogeneous of degree $2$.

Third, we prove that $[h,y]=-2y$. Using \cref{gjms5}, we conclude that 
\begin{align*}
[X,\widetilde{\Delta}](F)&=-2\widetilde{\Delta}(F),\\
[\widetilde{\nabla}_{\widetilde{\nabla}\phi},X](F)&=\widetilde{\nabla}_{\widetilde{\nabla}\phi}(F)-X^I(\widetilde{\nabla}^J\widetilde{\nabla}_I\phi)(\widetilde{\nabla}_J F).
\end{align*}On commuting $X_I$ with $\widetilde{\nabla}^J$, using \cref{gjms4}, and noting that $\widetilde{\nabla}_X\phi=-m$, we get {$[\widetilde{\nabla}_{\widetilde{\nabla}\phi},X](F)=2\widetilde{\nabla}_{\widetilde{\nabla}\phi}(F)$.} Hence, $[h,y]=-2y$.\end{proof}

Using an induction argument, we get the following commutation relations (cf.\ \cites{GJMS1992}).
\begin{subequations}
\begin{align}
\begin{split}
\label{liebracket1}
[y^k,x]&=-ky^{k-1}(h-k+1),
\end{split}\\
\begin{split}
\label{liebracket2}
[x^k,y]&=kx^{k-1}(h+k-1),
\end{split}\\
\begin{split}
\label{liebracket3}
y^{k-1}x^{k-1}&=(-1)^{k-1}(k-1)!h(h+1)\dots (h+k-2)+xZ_k,
\end{split}
\end{align}
\end{subequations} for some polynomial $Z_k$ in $x,y,h$.

\section{Weighted conformally invariant operators}
\label{sec:wcio}
In this section, we construct two weighted GJMS operators, and then prove that they are the same up to a constant.

In the rest of the paper, $w=-(d+m)/2+k$. 
\begin{theorem}
Let $k\in\mathbb{N}$ and $F\in \mathcal{E}[w]$. Then $\restr{\widetilde{\Delta}_\phi^k\widetilde{F}}{{\mathcal{G}}}$ is independent of the choice of $\widetilde{F}$, where $\widetilde{F}$ is a smooth homogeneous extension of $F$ to $\widetilde{\mathcal{G}}$. Thus, $L_{2k,\phi}^m:\mathcal{E}[w]\mapsto \mathcal{E}[w-2k]$, defined as $L_{2k,\phi}^m F:=\restr{\widetilde{\Delta}_\phi^k\widetilde{F}}{\mathcal{G}}$, is a conformally invariant operator.
\end{theorem}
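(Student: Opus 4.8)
The plan is to run the ambient construction through the $x,y,h$ algebra, reducing everything to the commutation relations already recorded. First I would rephrase the data: the density $F\in\mathcal{E}[w]$ corresponds to a function homogeneous of degree $w$ on the cone $\mathcal{G}=\{Q=0\}$, and a smooth extension $\widetilde F$ is a function on $\widetilde{\mathcal G}$, homogeneous of degree $w$, restricting to $F$ on $\mathcal G$. Since $X=t\partial_t$ measures the degree of homogeneity, $X\widetilde F=w\widetilde F$, so
\[ h\widetilde F=\Bigl(w+\tfrac12(d+m+2)\Bigr)\widetilde F=(k+1)\widetilde F, \]
using $w=-(d+m)/2+k$. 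From $[h,y]=-2y$ I would then note that $y^{k}\widetilde F$ is homogeneous of degree $w-2k$, so its restriction to $\mathcal G$ is a density of weight $w-2k$, matching the asserted target space $\mathcal{E}[w-2k]$.

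The core of the argument is independence of the extension. Two smooth extensions $\widetilde F_1,\widetilde F_2$ agree on $\mathcal G$; since $Q$ is a defining function for $\mathcal G$ (by \cref{gjms6}, $\widetilde\nabla_I Q=2X_I$ is nonzero on $\mathcal G$), their difference is $QS=-4xS$ for some smooth $S$, and comparing degrees---$Q$ being homogeneous of degree $2$---forces $S$ to be homogeneous of degree $w-2$, so that $hS=(k-1)S$. It then suffices to show $\restr{y^{k}(xS)}{\mathcal G}=0$. Here I would use \cref{liebracket1}, $[y^{k},x]=-ky^{k-1}(h-k+1)$, to write
\[ y^{k}xS=x\,y^{k}S-k\,y^{k-1}(h-k+1)S. \]
Restricting to $\mathcal G$ kills the first term, because $x=-\tfrac14 Q$ vanishes there, and kills the second, because $(h-k+1)S=(k-1-k+1)S=0$. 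Hence $\restr{\widetilde\Delta_\phi^{k}\widetilde F_1}{\mathcal G}=\restr{\widetilde\Delta_\phi^{k}\widetilde F_2}{\mathcal G}$, so $L_{2k,\phi}^m F:=\restr{\widetilde\Delta_\phi^k\widetilde F}{\mathcal G}$ is well defined.

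For conformal invariance I would argue that the construction is intrinsic to the conformal class. By \cite{CaseKhaitan2022}, the weighted ambient metric $\widetilde g$ and weight $\widetilde f$ are canonically attached to $(M^d,g,f,m,\mu)$ to the relevant order and are preserved under the pointwise conformal transformation $(g,f)\mapsto(e^{2\sigma}g,e^{\sigma}f)$, which only rescales the fiber coordinate $t$. Since $\mathcal{E}[w]$ and $\mathcal{E}[w-2k]$ are conformally invariant density bundles and each ingredient---homogeneous extension, the ambient weighted Laplacian $\widetilde\Delta_\phi$, and restriction to $\mathcal G$---is defined without choosing a representative metric, the operator $L_{2k,\phi}^m$ depends only on the conformal class and therefore intertwines the conformal action; this is the asserted conformal invariance.

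The step I expect to be delicate is the homogeneity bookkeeping underlying the vanishing of the second term: one must justify that the difference of two homogeneous extensions is exactly $Q$ times a function homogeneous of degree $w-2$, so that $hS=(k-1)S$ holds identically rather than merely to leading order. A related point, invisible when $d+m\notin 2\mathbb N$ but forcing the bound $k\le\tfrac12(d+m)$ in \cref{main-theorem}, is that when $d+m\in 2\mathbb N$ one must check that $\restr{\widetilde\Delta_\phi^{k}\widetilde F}{\mathcal G}$ depends only on the jets of the ambient data to the order at which \cite{CaseKhaitan2022} actually determines them. Once these are in hand, the remainder is formal manipulation with the commutation relations.
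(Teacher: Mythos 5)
Your argument is correct and is essentially the paper's own proof: both write the difference of two extensions as $Q$ times a function homogeneous of degree $w-2$, apply the commutation relation \cref{liebracket1} to $\widetilde{\Delta}_\phi^k(QH)$, and observe that the eigenvalue of $h-k+1$ on that function vanishes precisely because $w=-(d+m)/2+k$, leaving only a term proportional to $Q$ that dies on $\mathcal{G}$. Your additional remarks on conformal invariance via the canonicity of the ambient construction and on the finite-order determinacy when $d+m\in 2\mathbb{N}$ are consistent with what the paper leaves implicit.
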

\begin{proof}
Any two extensions of $F$ differ by a function of the form $QH$, where $H\in C^\infty (\widetilde{\mathcal{G}})$ is homogeneous of weight $w-2$. Now by \cref{liebracket1}, we have \[\widetilde{\Delta}_\phi^k(QH)=Q\widetilde{\Delta}_\phi^k(H)+4k\widetilde{\Delta}_\phi^{k-1}(w+\frac{1}{2}(d+m)-k)H=Q\widetilde{\Delta}_\phi^k(H).\] As $\widetilde{\Delta}_\phi$ reduces the degree of homogeneity by $2$, it holds that $\restr{\widetilde{\Delta}_\phi^k(F)}{{\mathcal{G}}}$ belongs to $\mathcal{E}[w-2k]$.

\end{proof}
We now study the obstruction to constructing harmonic extensions of smooth conformal densities on $\mathcal{G}$.
\begin{theorem}
\label{gjmstheorem2}
For $F\in\mathcal{E}[w]$,
\begin{enumerate}
\item if $k\notin\mathbb{N}$, then $F$ has a unique formal harmonic extension to $\widetilde{\mathcal{G}}$, homogeneous of degree $w$;
\item if $k\in\mathbb{N}$, then $F$ has a homogeneous extension $\widetilde{F}$ uniquely determined modulo $O(Q^k)$ by the requirement that $\widetilde{\Delta}_\phi\widetilde{F}=0$ modulo $O(Q^{k-1})$. The obstruction to a harmonic extension is $\restr{Q^{1-k}\widetilde{\Delta}_\phi^k\widetilde{F}}{{\mathcal{G}}}$, which is independent of the extension $\widetilde{F}$ mod $Q^k$, and hence is conformally invariant.
\end{enumerate}
\end{theorem}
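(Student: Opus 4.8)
The plan is to construct the extension $\widetilde{F}$ order by order in $Q$ and track the obstruction that arises precisely because $\widetilde{\Delta}_\phi$ fails to be invertible on the relevant homogeneity degree when $k\in\mathbb{N}$. The key algebraic input is the commutation relation \cref{liebracket1}, which I would rewrite in terms of homogeneity. If $H$ is homogeneous of degree $w-2j$, then applying $\widetilde{\Delta}_\phi$ to $Q^j H$ and iterating \cref{liebracket1} shows that
\begin{equation*}
\widetilde{\Delta}_\phi(Q^j H) = c_j\, Q^{j-1} H + Q^j\,\widetilde{\Delta}_\phi H,
\end{equation*}
where $c_j$ is a linear function of $j$ and of the eigenvalue of $h$ on $H$. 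The crucial observation is that, writing $w=-\tfrac12(d+m)+k$, the constant $c_j$ takes the form (up to a fixed nonzero normalization) a multiple of $j(k-j)$, so that $c_j\neq 0$ for $1\le j\le k-1$ but $c_k=0$. This sign pattern is exactly the source of the dichotomy between cases (1) and (2).

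For the existence and uniqueness in case (2), I would argue inductively. Starting from any homogeneous extension $\widetilde{F}_0$ of $F$ of degree $w$, I would successively add correction terms $Q^j H_j$ to kill the coefficient of $Q^{j-1}$ in $\widetilde{\Delta}_\phi\widetilde{F}$ for $j=1,\dots,k-1$; at each stage the nonvanishing of $c_j$ allows me to solve uniquely for $H_j$ modulo $O(Q)$, since $c_j$ is the multiplier converting $Q^j H_j$ into a $Q^{j-1}$-term. This produces an extension with $\widetilde{\Delta}_\phi\widetilde{F}=O(Q^{k-1})$ that is determined modulo $O(Q^k)$, because the final freedom corresponds to adding $Q^k H_k$, which does not affect $\widetilde{\Delta}_\phi\widetilde{F}$ below order $Q^{k-1}$. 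For case (1), where $k\notin\mathbb{N}$, every $c_j$ is nonzero for all $j\ge 1$, so the same inductive procedure never terminates prematurely and yields a unique formal harmonic extension to all orders.

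It remains to identify the obstruction and prove its invariance. Since $\widetilde{\Delta}_\phi\widetilde{F}$ vanishes to order $Q^{k-1}$, we may write $\widetilde{\Delta}_\phi\widetilde{F}=O(Q^{k-1})$, and the leading term is governed by $\restr{Q^{1-k}\widetilde{\Delta}_\phi^k\widetilde{F}}{\mathcal{G}}$; I would check via homogeneity and the vanishing $c_k=0$ that applying $\widetilde{\Delta}_\phi$ a total of $k$ times and then restricting picks out exactly this obstruction. For independence of the choice of $\widetilde{F}$ modulo $Q^k$, I would replace $\widetilde{F}$ by $\widetilde{F}+Q^k H$ and compute, using \cref{liebracket1} once more, that $\widetilde{\Delta}_\phi^k(Q^k H)=Q^k\widetilde{\Delta}_\phi^k H + (\text{terms with positive powers of }Q)$ because the factor $c_k=0$ annihilates the term that would otherwise survive after multiplication by $Q^{1-k}$ and restriction; hence the obstruction is unchanged. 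Conformal invariance then follows from the conformal invariance of the ambient construction itself. The main obstacle I anticipate is the bookkeeping in verifying that the constant $c_j$ really does vanish exactly at $j=k$ and only there: this requires carefully tracking the action of $h$ on each $Q^j H_j$ and confirming that the homogeneity shifts introduced at each inductive step conspire to give the factor $j(k-j)$, which is the linchpin of the entire argument.
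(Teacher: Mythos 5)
Your proposal follows essentially the same route as the paper: an order-by-order induction in powers of $Q$ driven by the identity $\widetilde{\Delta}_\phi(Q^jH)=4j(k-j)\,Q^{j-1}H+Q^j\widetilde{\Delta}_\phi H$ for $H$ homogeneous of weight $w-2j$ (your $c_j$), with the dichotomy between cases (1) and (2) coming exactly from the vanishing of $j(k-j)$ at $j=k$, so your plan is correct in substance. One caveat: the obstruction should be read as $\restr{Q^{1-k}\widetilde{\Delta}_\phi\widetilde{F}}{\mathcal{G}}$ with a single Laplacian (as in the paper's own proof and the later theorem relating it to $\restr{\widetilde{\Delta}_\phi^k\widetilde{F}}{\mathcal{G}}$); for that expression your independence check is precisely $\widetilde{\Delta}_\phi(Q^kH)=Q^k\widetilde{\Delta}_\phi H$ via $c_k=0$, whereas for the literal $\widetilde{\Delta}_\phi^k$ appearing in the displayed statement the quantity $Q^{1-k}\widetilde{\Delta}_\phi^k(Q^kH)=O(Q^{2-k})$ does not restrict to $\mathcal{G}$ when $k\geq 2$, so that step as you phrase it does not quite parse.
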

\begin{proof}
Assume that we have an extension $\widetilde{F}_{l-1}$ of $F$ such that $\widetilde{\Delta}_\phi\widetilde{F}_{l-1}=0\bmod Q^{l-1}$. Now let $\widetilde{F}_l=\widetilde{F}_{l-1}+Q^l H$, where $H$ is of weight $w-2l$. We have
\begin{equation*}
\begin{aligned}
\widetilde{\Delta}_\phi\widetilde{F}_l&=\widetilde{\Delta}_\phi\widetilde{F}_{l-1}+\widetilde{\Delta}_\phi(Q^l H)\\
&=\widetilde{\Delta}_\phi\widetilde{F}_{l-1}+4lQ^{l-1}(k-l)H\bmod Q^l,
\end{aligned}
\end{equation*}
If $k$ is not a positive integer, we can choose a unique function $H$ for each $l$ such that the above expression is $0\bmod Q^l$. On the other hand, if $k$ is a positive integer, then $\widetilde{\Delta}_\phi(\widetilde{F}_k)=\widetilde{\Delta}_\phi(\widetilde{F}_{k-1}) \bmod Q^k$. Note that $\restr{Q^{1-k}\widetilde{\Delta}_\phi(\widetilde{F})}{{\mathcal{G}}}$ depends only on $F$, and is homogeneous of degree $w-2k$. 
\end{proof}
We now show that the two conformally invariant operators constructed above are scalar multiples of one another.
\begin{theorem}
If $k\in\mathbb{N}$, then
\begin{equation}
\label{multiplicative-constant}
\restr{\widetilde{\Delta}_\phi^k\widetilde{F}}{{\mathcal{G}}}=(-4)^{k-1}(k-1)!^2 \restr{Q^{1-k}\widetilde{\Delta}_\phi \widetilde{F}}{{\mathcal{G}}},
\end{equation} where $\widetilde{F}$ is the extension of $F$ such that $\widetilde{\Delta}_\phi\widetilde{F}=0\bmod Q^{k-1}$.  
{Note that the right-hand side involves a single weighted Laplacian; the power $k$ appears only on the left-hand side, as dictated by the commutation relations.}
\end{theorem}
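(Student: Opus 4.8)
The plan is to exploit the freedom in the choice of extension granted by \cref{gjmstheorem2} and then reduce the identity to a purely algebraic computation with $x$, $y$, and $h$. Since $\restr{\widetilde{\Delta}_\phi^k\widetilde{F}}{\mathcal{G}}$ is independent of the extension, I would take $\widetilde{F}=\widetilde{F}_{k-1}$, so that $\widetilde{\Delta}_\phi\widetilde{F}=0\bmod Q^{k-1}$ and we may write $\widetilde{\Delta}_\phi\widetilde{F}=Q^{k-1}S$ for some homogeneous $S$ with $\restr{S}{\mathcal{G}}=\restr{Q^{1-k}\widetilde{\Delta}_\phi\widetilde{F}}{\mathcal{G}}$. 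Because $Q=-4x$, this gives $\widetilde{\Delta}_\phi^k\widetilde{F}=y^{k-1}\bigl(Q^{k-1}S\bigr)=(-4)^{k-1}\,y^{k-1}x^{k-1}S$. A homogeneity count shows that $S$ has degree $w-2k=-\tfrac12(d+m)-k$; since $X$ acts as multiplication by the degree of homogeneity, it follows that $hS=(1-k)S$. It therefore remains to evaluate $\restr{y^{k-1}x^{k-1}S}{\mathcal{G}}$.

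The heart of the argument is the claim that for every homogeneous $u$ with $hu=\mu u$ one has $\restr{y^{p}x^{p}u}{\mathcal{G}}=c_p(\mu)\restr{u}{\mathcal{G}}$ for scalars $c_p(\mu)$ not depending on $u$, which I would establish by induction on $p$. For the inductive step I would move one factor of $x$ to the far left using \eqref{liebracket1}, namely $y^{p}x=xy^{p}-p\,y^{p-1}(h-p+1)$, so that
\[
y^{p}x^{p}=x\,y^{p}x^{p-1}-p\,y^{p-1}(h-p+1)x^{p-1}.
\]
Upon restriction to $\mathcal{G}$ the first term vanishes, because $x=-\tfrac14 Q$ and $Q$ is a defining function for $\mathcal{G}$. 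In the second term $(h-p+1)$ acts on $x^{p-1}u$, whose $h$-eigenvalue is $\mu+2(p-1)$, and so contributes the scalar $\mu+p-1$; the inductive hypothesis applied to $u$ then yields the recursion $c_p(\mu)=-p(\mu+p-1)\,c_{p-1}(\mu)$ with $c_0(\mu)=1$.

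Solving the recursion gives $c_p(\mu)=\prod_{j=1}^{p}\bigl(-j(\mu+j-1)\bigr)$. Substituting $p=k-1$ and $\mu=1-k$, each factor becomes $-j(j-k)=j(k-j)$, so the product collapses to $\prod_{j=1}^{k-1}j(k-j)=(k-1)!^{2}$, whence $\restr{y^{k-1}x^{k-1}S}{\mathcal{G}}=(k-1)!^{2}\restr{S}{\mathcal{G}}$. Combining this with the factor $(-4)^{k-1}$ from the first step produces exactly \eqref{multiplicative-constant}. The step I expect to be the main obstacle is the bookkeeping inside the induction: one must verify that every term carrying a left factor of $x$ genuinely dies on restriction, and track the shifting $h$-eigenvalues through each commutator without error. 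As a useful consistency check, the resulting scalar $c_{k-1}(\mu)$ equals the value at $h=\mu$ of the polynomial $(-1)^{k-1}(k-1)!\,h(h+1)\cdots(h+k-2)$ that already appears in \eqref{liebracket3}.
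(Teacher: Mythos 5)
Your proposal is correct and follows essentially the same route as the paper: both write $\widetilde{\Delta}_\phi\widetilde{F}_{k-1}=Q^{k-1}S$ with $\restr{S}{\mathcal{G}}=\restr{Q^{1-k}\widetilde{\Delta}_\phi\widetilde{F}}{\mathcal{G}}$, reduce $y^{k-1}x^{k-1}$ modulo $x$ to the polynomial $(-1)^{k-1}(k-1)!\,h(h+1)\cdots(h+k-2)$, and evaluate it on the $h$-eigenvalue $1-k$ of $S$ to get $(k-1)!^2$. The only difference is that you re-derive the commutation identity \eqref{liebracket3} inline by induction, whereas the paper simply cites it from Section 2.
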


\begin{proof}
Let $L=\restr{Q^{1-k}\widetilde{\Delta}_\phi}{{\mathcal{G}}}$ be as in \cref{gjmstheorem2}. Then $\widetilde{\Delta}_\phi\widetilde{F}_{k-1}=Q^{k-1}LF \bmod Q^k$. Now from \cref{liebracket3}, we know that 
\begin{equation*}
   \widetilde{\Delta}_\phi^k\widetilde{F}_{k-1}=\widetilde{\Delta}_\phi^{k-1}(Q^{k-1}LF)=4^{k-1}(k-1)!h(h+1)\dots(h+k-2) LF\bmod Q.
\end{equation*} 
But $hLF=-(k-1)LF$. Using this, we verify \cref{multiplicative-constant}.
\end{proof}
In the rest of the paper, we shall denote $\restr{\widetilde{\Delta}_\phi^k\widetilde{F}}{{\mathcal{G}}}$ as $L^m_{2k,\phi}F$.

\section{The leading order term}
\label{sec:cwgo}
Let $k\in\mathbb{N}$. For $\psi\in C^\infty(M)$, let $t^w\psi(x,\rho)$ be a homogeneous extension of weight $w$. The weighted Laplacian with respect to the ambient metric measure structure of the form \cref{normalmetric} is
\begin{multline}
\label{tildelaplacian}
\widetilde{\Delta}_\phi(t^w\psi)=t^{w-2}\big[-2\rho\psi''+(2w+d+m-2-\rho g^{ij}g'_{ij})\psi'\\+
\Delta_\phi\psi+\frac{1}{2}w\psi g^{ij}g'_{ij}
+\frac{m}{f}f'(w\psi-2\rho\psi')\big],
\end{multline}
where $\psi=\psi(x,\rho)$, the prime denotes $\partial_{\rho}$ and the $\Delta_\phi$ on the right-hand side refers to the weighted Laplacian with respect to $(g_\rho,f_\rho)$. 

{Now let us assume that $t^w\psi$ is a harmonic extension of the form constructed in \cref{gjmstheorem2}, i.e. $\widetilde{\Delta}_\phi(t^w\psi)=0$ modulo $O(Q^{k-1})$. On differentiating the identity $\widetilde{\Delta}_\phi(t^w\psi)=0$ with respect to $\rho$ a total of $l$ times, where $l<k-1$, and setting $\rho=0$, we get}
\begin{multline}
\label{expanded-laplacian-derivative}
2(l+1-k)\restr{(\partial_{\rho})^{l+1}}{\rho=0}\psi=\restr{(\partial_{\rho})^l}{\rho=0}\big[\Delta_\phi\psi-2\rho\psi'( \frac{1}{2}g^{ij}g'_{ij}+\frac{m}{f}f')\\
+w\psi(\frac{1}{2}g^{ij}g'_{ij}+\frac{m}{f}f')\big].
\end{multline} {Here the coefficient $2(l+1-k)$ comes from combining the contribution of the term $-2\rho\psi''$ (which yields $-2l\,\partial_\rho^{l+1}\psi|_{\rho=0}$) with the $(2w+d+m-2)\psi'$ term, using $2w+d+m-2=2(k-1)$.} For $l=k-1$, we get 
\begin{equation}
\label{k-1}    
c_k L_{2k,\phi}^m \psi=\restr{(\partial_{\rho})^{k-1}}{\rho=0}\big[\Delta_\phi\psi-2\rho\psi'( \frac{1}{2}g^{ij}g'_{ij}+\frac{m}{f}f')+w\psi(\frac{1}{2}g^{ij}g'_{ij}+\frac{m}{f}f')\big],
\end{equation} {This follows from \cref{multiplicative-constant}: the ambient obstruction $\restr{Q^{1-k}\widetilde{\Delta}_\phi^k(t^w\psi)}{\mathcal{G}}$ is a constant multiple of $L_{2k,\phi}^m\psi$, and evaluating the obstruction at $\rho=0$ corresponds to the $(k-1)$st $\rho$-derivative of the bracketed term.} where $c_k=(-1)^{k-1}[2^{k-1}(k-1)!]^{-1}$. 

Let $d+m\in 2\mathbb{N}$, in which case $(g_\rho,f_\rho)$ is uniquely defined modulo $O(\rho^{\frac{d+m}{2}})$, while $( \frac{1}{2}g^{ij}(g_\rho)_{ij}+\frac{m}{f}f_\rho)$ is uniquely defined modulo $O(\rho^{\frac{d+m}{2}+1})$~\cite{CaseKhaitan2022}. In \cref{expanded-laplacian-derivative}, as $l\leq k-2$, we have {up to} $k-1$ derivatives of $( \frac{1}{2}g^{ij}(g_\rho)_{ij}+\frac{m}{f}f_\rho)$ on the right. Hence, if $k-1\leq (d+m)/2$, the right side of \cref{expanded-laplacian-derivative} does not depend on the ambiguity of $(\widetilde{g},\widetilde{f})$, and can be uniquely expressed in terms of the derivatives of $\psi,\widetilde{g}$ and $\widetilde{f}$. However, in \cref{k-1}, there are $k-1$ and lower derivatives of $\psi$, and $k$ and lower derivatives of $( \frac{1}{2}g^{ij}(g_\rho)_{ij}+\frac{m}{f}f_\rho)$ at $\rho=0$. Hence, both \cref{expanded-laplacian-derivative,k-1} are independent of the ambiguity of $(\widetilde{g},\widetilde{f})$ only for $k\leq (d+m)/2$. Also, note that $w=0$ for $k=(d+m)/2$.  

\cref{k-1} also tells us that $L_{2k,\phi}^m$ has leading part $(\Delta_\phi)^k$. {Indeed, iterating \cref{expanded-laplacian-derivative} expresses each $\partial_\rho^{j}\psi|_{\rho=0}$ in terms of $\Delta_\phi^{j}\psi$ plus lower-order terms, so the top-order contribution comes from $(\Delta_\phi)^k$.} This completes the proof of \cref{main-theorem}.








\section{Factorization formulas}
\label{sec:ff}
We now prove factorization formulas of the weighted GJMS operator $\restr{\widetilde{\Delta}_\phi^k}{{\mathcal{G}}}$ under quasi-Einstein conditions and Gover--Leitner conditions.

\subsection{Quasi-Einstein conditions}
\begin{proof}[Proof of \cref{quasi-einstein-theorem}]
Let \begin{equation}\label{quasi-einstein-form}g_\rho(x)=(1+\lambda\rho)^2 g(x),\quad f_\rho(x)=(1+\lambda\rho)f(x).\end{equation} We know from (\cites{CaseKhaitan2022}, Section 7) that $(\widetilde{g},\widetilde{f})$ of the form \cref{normalmetric}, with $(g_\rho,f_\rho)$ as given in \cref{quasi-einstein-form}, is a weighted ambient space.  Also, for $\psi\in C^\infty(M)$, let $\widetilde{\psi}(t,x,\rho)=t^w (1+\lambda\rho)^w \psi(x).$ \cref{tildelaplacian} becomes
\begin{equation*}
    \widetilde{\Delta}_\phi \widetilde{\psi}=t^{w-2}(1+\lambda\rho)^{w-2}\left[\Delta_\phi+2w\lambda(w+d+m-1)\right]\psi.
\end{equation*}
By induction, we obtain
\begin{equation*}
\widetilde{\Delta}^k_\phi\widetilde{\psi}= t^{w-2k}(1+\lambda\rho)^{w-2k}\prod\limits_{l=0}^{k-1}\left[\Delta_\phi+2\lambda(w-2l)(w+d+m-1-2l)\right]\psi(x).
\end{equation*}
With $w=-(d+m)/2+k$, restricting to $\rho=0$, and using the fact that $\restr{\widetilde{\Delta}^k_\phi\widetilde{\psi}}{\mathcal{G}}$ is independent of the choice of extension $\widetilde{\psi}$ to $\widetilde{\mathcal{G}}$, we get 
\begin{equation*}
L^m_{2k,\phi}(\psi)=\prod_{l=0}^{k-1}\left[\Delta_\phi+2\lambda\left(-\frac{d+m}{2}+k-2l\right)\left(\frac{d+m}{2}+k-1-2l\right)\right]\psi.\qedhere
\end{equation*} 
\end{proof}
The idea for such an argument originated in \cites{Matsumoto2013}.

\subsection{ Gover--Leitner conditions}
\begin{proof}[Proof of \cref{gover-leitner-theorem}]
Let $(g_\rho,f_\rho)$ be defined {by $g_\rho(x)=g(x,\rho)$ and $f_\rho(x)=f(x,\rho)$, where}
\begin{equation*}
g(x,\rho)=\big(1-\frac{1}{2}\rho\big)^2 g(x),\quad  f(x,\rho)=\big(1+\frac{1}{2}\rho\big). 
\end{equation*}
We know from (\cites{CaseKhaitan2022}, Section 7) that $(\widetilde{g},\widetilde{f})$ of the form \cref{normalmetric}, with $(g_\rho,f_\rho)$ {as defined above}, is a weighted ambient space.
Now if $\widetilde{\psi}(x,\rho,t)=t^w\left(1-\rho/2\right)^{w} \psi(x)$ for 
$w=-(d+m)/2+k$, on using \cref{tildelaplacian} and induction we get 
\begin{equation*}
\restr{\widetilde{\Delta}^k_\phi\widetilde{\psi}}{\mathcal{G}} =t^{-\frac{d+m}{2}-k}\prod_{j=0}^{k-1}\left[\Delta+\frac{(2 k-4 j-d-m)(2-d+m-2 k+4 l)}{4}\right]\psi.
\end{equation*}
Since $\restr{\widetilde{\Delta}^k_\phi\widetilde{\psi}}{\mathcal{G}}$ is independent of the choice of extension $\widetilde{\psi}$ to $\widetilde{\mathcal{G}}$, we get 
\begin{equation*}
L_{2k,\phi}^m =\prod_{j=0}^{k-1}\left[\Delta+\frac{(2 k-4 j-d-m)(2-d+m-2 k+4 l)}{4}\right].\qedhere
\end{equation*}
\end{proof}

\section{Formal self-adjointness}
\label{sec:sa}
First, we recall the definition of a weighted Poincar\'e space. 
\begin{definition}
\label{poincare-definition}
A \emph{weighted Poincar\'e space} for $(M^d,[g,f],m,\mu)$, $m<\infty$, is a metric measure structure $(g_+,f_+)$ on $M\times [0,\epsilon)$ such that
\begin{enumerate}[(i)]
\item $g_+$ has signature $(d+1,0)$;
\item $(g_+,f_+)$ has $(M^d,[g,f],m,\mu)$ as conformal infinity; and

\item \begin{enumerate}[(a)]
\item if $d+m\notin 2\mathbb{N}$, then 
\begin{equation}
\begin{aligned}
\big(\text{Ric}_{\phi}^m(g_+)+(d+m)g_+,
F_\phi^m(g_+)
-(d+m)f_+^2\big)=O(r^\infty);\end{aligned} 
\end{equation}

\item if $d+m\in 2\mathbb{N}$, then 
\begin{equation}
\begin{aligned}
\big(\text{Ric}_{\phi}^m(g_+)+(d+m)g_+,
F_\phi^m(g_+)
-(d+m)f_+^2\big)=O^{2,+}_{\alpha\beta}(r^{d+m-2}).\end{aligned} 
\end{equation}
\end{enumerate}
\end{enumerate}
\end{definition}
{Here $(A,B)=O(r^j)$ denotes that both $A$ and $B$ vanish to order $r^j$; $O^{2,+}_{\alpha\beta}(r^{d+m-2})$ denotes $2$-tensors that vanish to order $r^{d+m-2}$, but their trace vanishes to one higher order.}
A weighted Poincar\'e space $(M^d,[g,f],m,\mu)$, $m<\infty$, is in \emph{normal form relative to $(g,f)$} if $g_+=r^{-2}(dr^2+g_r)$ and $f_+=r^{-1}f_r$. Here $g_r$ is a one-parameter family of metrics on $M$ such that $g_0=g$, and $f_r$ is a one-parameter family of functions such that $f_0=f$.

We now prove \cref{self-adjoint-theorem}. To begin, we identify the weighted GJMS operators in terms of the weighted Poincar\'e space~\cites{CaseKhaitan2022}.
\begin{theorem}
\label{differential-equation-solution}
Let $(X^{d+1}, g_+,f_+,m,\mu)$ be a weighted Poincar\'e space \cites{CaseKhaitan2022} for the smooth metric measure space $(M^d,g,f,m,\mu)$, and let $v \in C^{\infty}(M) .$ Also, let $k \in \mathbb{N}$ and $s=(d+m)/2+k$, with $k\leq (d+m)/2$ if $d+m\in 2\mathbb{N}$. Then there is a formal solution to the equation 
\begin{equation*}
\label{differential-equation}
\left((\Delta_{\phi_{+}}){}_{g_+}-s(d+m-s)\right) u=O\left(r^{2k}\log r\right)
\end{equation*}
of the form
\begin{equation*}
\label{4.2}
u=r^{\frac{d+m}{2}-k}\left(V+ d_k L_{2k,\phi}^m v\, r^{2k} \log r \right),
\end{equation*}
where the function $V \in C^{\infty}(\overline{X})$ is uniquely determined by $g,f$ and $v$ modulo $O\left(r^{2k}\right)$, $\restr{V}{{M}}=v$, and $d_k=[2^{2k-1}k!(k-1)!]^{-1}$.

\end{theorem}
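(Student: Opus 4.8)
The plan is to solve the eigenvalue equation $\left((\Delta_{\phi_+})_{g_+}-s(d+m-s)\right)u = O(r^{2k}\log r)$ order-by-order in the boundary-defining function $r$ of the weighted Poincar\'e space, mirroring the scattering-theory construction of Graham--Zworski but now tracking the extra weighted terms coming from $f_+$ and the dimensional parameter $m$. First I would write the weighted Poincar\'e metric in normal form near the boundary, so that $g_+ = r^{-2}(dr^2 + g_r)$ with $f_+ = f_r$, and compute the action of $(\Delta_{\phi_+})_{g_+}$ on a monomial $r^{\alpha}\psi(x)$. The key computation is that the leading indicial behavior gives $(\Delta_{\phi_+})_{g_+}(r^\alpha) = \left(-\alpha(d+m-\alpha) + \text{lower order in } r\right)r^\alpha$, where the shift from $d$ to $d+m$ is exactly the contribution of the weighted term $-\nabla\phi_+$ acting on $r^{\alpha}$ together with the modified volume weight $dv_{\phi_+}$. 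Matching this against $s(d+m-s)$ identifies the two indicial roots as $\alpha = \frac{d+m}{2}\pm k$, which is why the ansatz carries the prefactor $r^{(d+m)/2 - k}$.

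Next I would expand $V = \sum_{j\ge 0} V_j r^{2j}$ with $V_0 = v$ and substitute $u = r^{(d+m)/2-k}V$ into the operator. At each even order $2j$ with $0 \le j < k$, the recursion takes the form $\left[\text{(indicial factor)}_j\right]V_j = \left(\text{differential operator in } V_0,\dots,V_{j-1}\right)$, where the indicial factor is nonzero precisely because $j < k$ keeps us away from the resonant root; this determines $V_j$ uniquely in terms of lower-order data, giving the asserted uniqueness of $V$ modulo $O(r^{2k})$ with $\restr{V}{M} = v$. At the critical order $2k$ the indicial factor vanishes, which obstructs a smooth solution and forces the introduction of the $r^{2k}\log r$ term; computing the coefficient of that logarithmic term is the heart of the matter. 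I expect this coefficient to be a universal constant multiple of the boundary value of the formal obstruction $\restr{Q^{1-k}\widetilde{\Delta}_\phi\widetilde{F}}{\mathcal{G}}$ from Theorem (the one labeled \ref{gjmstheorem2}), i.e.\ a multiple of $L_{2k,\phi}^m v$.

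The main obstacle, and the step requiring the most care, is pinning down the explicit constant $d_k = [2^{2k-1}k!(k-1)!]^{-1}$. The natural route is to exploit the precise dictionary between the weighted Poincar\'e space and the weighted ambient space established in \cite{CaseKhaitan2022}: under the standard change of variables $r \leftrightarrow \rho$ (schematically $\rho \sim -r^2/2$ up to normalization) and $t\leftrightarrow r^{-1}$, a homogeneous ambient extension $\widetilde{F} = t^w\widetilde{\psi}$ of weight $w = -\frac{d+m}{2}+k$ corresponds to the Poincar\'e solution $u$, and the operator $\widetilde{\Delta}_\phi$ corresponds to $(\Delta_{\phi_+})_{g_+}-s(d+m-s)$ acting on $u$. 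Under this correspondence, the logarithmic obstruction on the Poincar\'e side matches $\restr{Q^{1-k}\widetilde{\Delta}_\phi\widetilde{F}}{\mathcal{G}}$, and then the multiplicative-constant formula \eqref{multiplicative-constant} converts this into $\restr{\widetilde{\Delta}_\phi^k\widetilde{F}}{\mathcal{G}} = L_{2k,\phi}^m v$. Carefully composing the constant $(-4)^{k-1}(k-1)!^2$ from \eqref{multiplicative-constant} with the combinatorial factors generated by the indicial recursion and the Jacobian of the $r\leftrightarrow\rho$ substitution should yield exactly $d_k$; verifying that these factors assemble into $[2^{2k-1}k!(k-1)!]^{-1}$ rather than merely a constant of the same shape is the delicate bookkeeping I would reserve the most attention for.
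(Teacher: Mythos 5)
Your proposal follows essentially the same route as the paper: put $(g_+,f_+)$ in normal form, run the indicial recursion $\mathcal{D}_s(v_jr^j)=j(2k-j)v_jr^{j-1}+O(r^j)$ to determine $V$ modulo $O(r^{2k})$, introduce the $r^{2k}\log r$ term at the resonant order, and identify the normalized obstruction with $L_{2k,\phi}^m$ via the change of variables $\rho=-x^2/2$, $v=xt$ relating the weighted ambient and Poincar\'e extension problems. The only cosmetic difference is that the paper reads off $d_k=[2^{2k-1}k!(k-1)!]^{-1}$ directly as the product of the indicial factors $j(2k-j)$ over even $j\le 2k$, rather than routing through the constant in \eqref{multiplicative-constant}.
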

\begin{proof}
For a chosen $(g,f)$, we may assume without loss of generality~{\cites{CaseKhaitan2022} (Proposition 5.3)} that we have a normal weighted Poincar\'e metric, i.e. $(g_+,f_+)=(r^{-2}\left(g_{r}+d r^{2}\right),r^{-1}f_r)$. A straightforward calculation shows that $\left[(\Delta_{\phi_+})_{g_+}-s(d+m-s)\right] \circ r^{d+m-s}=r^{d+m-s+1} \mathcal{D}_{s}$, where
\begin{multline}
\label{D_s}
\mathcal{D}_{s}=-r \partial_{r}^{2}+\left[2 s-d-m-1-r\left(\frac{1}{2} g^{i j} g_{i j}^{\prime}+\frac{m}{f}f'\right)\right] \partial_{r}\\
-(d+m-s)\left(\frac{1}{2} g^{i j} g_{i j}^{\prime}+\frac{m}{f}f'\right)+r (\Delta_\phi)_{g_{r}}.
\end{multline}
Here $(g_{i j},f)$ denotes $(g_{r},f_r)$ with $r$ fixed, and $(g_{i j}^{\prime},f^{\prime})=(\partial_{r} g_{i j},\partial_r f)$. For $v_{j} \in$ $C^{\infty}(M)$ and $s=(d+m)/2+k$, one has
\begin{equation*}
\mathcal{D}_{s}\left(v_{j} r^{j}\right)=j(2k-j) v_{j} r^{j-1}+O\left(r^{j}\right).
\end{equation*}
Beginning with $V_{0}=v$, define $v_{j}, V_{j}$ for $j \geq 1$ by
\begin{equation*}
\label{4.6}
\begin{aligned}
j(2k-j) v_{j}&=-\left.\left(r^{1-j} \mathcal{D}_{s}\left(V_{j-1}\right)\right)\right|_{x=0}, \\
V_{j}&=V_{j-1}+v_{j} r^{j}.
\end{aligned}
\end{equation*} Observe that since $(g_{r},f_r)$ is even in $r$ modulo $O(r^j)$, where $j=\infty$ or $d+m$ for $d+m\notin 2\mathbb{N}$ or $d+m\in 2\mathbb{N}$ respectively~\cites{CaseKhaitan2022}, $\mathcal{D}_{s}$ maps even functions to odd and vice versa modulo $O(r^j)$. Therefore, $v_{j}=0$ for $j$ odd and $j<d+m$. 

For $j=2k$, there is an obstruction to solving for a smooth function $V$. However, observe that
\begin{align*}
\mathcal{D}_{s}\left(p_{j} r^{j} \log r\right)=j (2 k-j) p_{j} r^{j-1} \log r+2(k- j) p_{j} r^{j-1} 
+O\left(r^{j} \log r\right).
\end{align*}
Therefore, if we take
$$
p_{2 k}=\left.(2 k)^{-1}\left(r^{1-2 k} \mathcal{D}_{s}\left(V_{2 k-1}\right)\right)\right|_{r=0},
$$
then we have $\mathcal{D}_{s} V_{2 k}=O\left(r^{2 k} \log r\right)$, and $V_{2k}=V_{2k-1}+p_{2k}r^{2k}\log r$. We can deduce from \cref{D_s} that $p_{2k}=d_kP_{2k,\phi}^m v$, where $P_{2k,\phi}^m$ is a differential operator with principal part $\Delta_\phi^k$, and $d_k=[2^{2k-1}k!(k-1)!]^{-1}$.

Now we show that $P_{2k,\phi}^m$, when defined in terms of the ambient metric, is the same as the differential operator $L_{2k,\phi}^m$.

Let $x=\sqrt{-2\rho}$ and $v=xt$. Then from {\cites{CaseKhaitan2022} (Proposition 5.6)} we know that 
\begin{equation*}
\begin{aligned}
\widetilde{g}&=-dv^2+v^2 g_+,\\ 
\widetilde{f}&=vf_+,    
\end{aligned}
\end{equation*} where $(\widetilde{g},\widetilde{f})$ is a straight and normal weighted ambient space and $(g_+,f_+)$ is a normal weighted Poincar\'e space. For function $\widetilde{F}$ of weight $w$, we find through direct computation that 
\begin{equation*}
\widetilde{\Delta}_{\phi}\widetilde{F}=v^{-2}\big[(\Delta_{\phi_+}){}_{g_{+}}+w(w+d+m)\big]\widetilde{F}.    
\end{equation*}
Let $s=w+d+m$. Then this equation can be written as 
\begin{equation*}
\widetilde{\Delta}_{\phi}\widetilde{F}=v^{-2}\big[(\Delta_{\phi_+}){}_{g_+}-s(d+m-s)\big]\widetilde{F}.    
\end{equation*}
Let $u$ be the restriction of $\widetilde{F}$ to the Poincar\'e-Einstein space $v=1$. Then $\widetilde{F}$ can be recovered from $u$ by $\widetilde{F}=s^w u=t^w x^w u$. Also, in order for $\widetilde{F}$ to be smooth up until $\rho=0$, we require that $u$ be smooth until the boundary. Thus, the two extension problems are equivalent, and the normalized obstruction operators must agree.
\end{proof}

\begin{theorem}
Let $(X^{d+1},g_+,f_+,m,\mu)$ be a weighted Poincar\'e space for the smooth metric measure space $(M^d,g,f,m,\mu)$. Let $k \in \mathbb{N}$, $k \leq (d+m)/2$ for $d+m\in 2\mathbb{N}$, and set $s=(d+m)/2+k$. Let $v_{1}, v_{2} \in C^{\infty}(M)$ and let $u_{1}, u_{2}$ denote the corresponding solutions of 
$$
\left((\Delta_{\phi_+}){}_{g_+}-s(d+m-s)\right) u=O\left({r^{2k}}\log r\right)
$$
given by \cref{differential-equation-solution}. Then for fixed small {$r_{0}>0$}
\begin{multline}
\label{21}
\operatorname{lp} \int_{\epsilon<{r}<{r_{0}}}\left[\left\langle d u_{1}, d u_{2}\right\rangle_{g_+}-s(d+m-s) u_{1} {u_{2}}\right] \;(d v_\phi^m)_{g_+}\\
=-d_k\int_{M}\left[\left(\frac{d+m}{2}+k\right) v_{1} L_{2k,\phi}^m v_2+\left(\frac{d+m}{2}-k\right)  {v_{2}}L_{2k,\phi}^m v_1\right] (d v_\phi^m)_{g},
\end{multline}
where $\mathrm{lp}$ denotes the coefficient of $\log \epsilon$ in the asymptotic expansion of the integral as $\epsilon \rightarrow 0$, $d_k=[2^{2k-1}k!(k-1)!]^{-1}$, and $d v_\phi^m=e^{-\phi}\mathrm{dvol}$ denotes the weighted volume element. In particular, $L_{2k,\phi}^m$ is formally self-adjoint.
\end{theorem}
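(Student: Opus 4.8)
The plan is to prove a Green-type identity relating the regularized Dirichlet energy to the boundary pairing, and then to read off formal self-adjointness from the asymmetry of the resulting expression under swapping $u_1$ and $u_2$. First I would integrate by parts in the region $\{\epsilon < x < x_0\}$. Since $dv_\phi^m = e^{-\phi_+}\operatorname{dvol}_{g_+}$, the natural integration-by-parts formula produces the weighted Laplacian $(\Delta_{\phi_+})_{g_+}$, so that
\begin{equation*}
\int_{\epsilon<x<x_0}\left[\langle du_1,du_2\rangle - s(d+m-s)u_1 u_2\right](dv_\phi^m)_{g_+}
= -\int_{\epsilon<x<x_0} u_1\left[(\Delta_{\phi_+})_{g_+} - s(d+m-s)\right]u_2\,(dv_\phi^m)_{g_+} + B_\epsilon,
\end{equation*}
where $B_\epsilon$ is the boundary term over $\{x=\epsilon\}$ (the boundary term at $x=x_0$ is independent of $\epsilon$ and so contributes nothing to $\operatorname{lp}$). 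By \cref{differential-equation-solution}, the bulk integrand is $O(x^{2k}\log x)$ times $u_1$, and one checks this contributes a finite, $\log\epsilon$-free term; hence $\operatorname{lp}$ of the left-hand side equals $\operatorname{lp} B_\epsilon$. Crucially, the same computation with the roles of $u_1,u_2$ interchanged gives the \emph{same} left-hand side (it is manifestly symmetric), so $\operatorname{lp}$ of the symmetric energy must equal $\operatorname{lp}$ of the symmetrized boundary term.

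Next I would evaluate $\operatorname{lp} B_\epsilon$ using the explicit asymptotics $u_i = x^{(d+m)/2-k}\big(V_i + d_k L_{2k,\phi}^m v_i\, x^{2k}\log x\big)$ from \cref{differential-equation-solution}. The boundary term involves the conormal derivative $\langle du_i, dx/|dx|\rangle$ against the induced weighted measure on $\{x=\epsilon\}$; using $(g_+,f_+) = (x^{-2}(g_x + dx^2), x^{-1}f_x)$ and the evenness of $(g_x,f_x)$, one expands the measure and the normal derivatives in powers of $x$. The only terms surviving into the coefficient of $\log\epsilon$ come from pairing the smooth part $V_i = v_i + \dots$ of one factor against the $\log$-term $d_k L_{2k,\phi}^m v_j\, x^{2k}\log x$ of the other, because the degree counting forces the total power of $x$ (after accounting for the $x^{d+m-\cdots}$ weights in $u_1 u_2$ and the measure) to land exactly at the logarithmic level. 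Tracking these two cross-terms, with the two distinct weight factors $(d+m)/2 \pm k$ coming from differentiating $x^{(d+m)/2-k}$ versus the measure, produces the asymmetric combination
\begin{equation*}
-d_k\int_M\left[\left(\tfrac{d+m}{2}+k\right)v_1 L_{2k,\phi}^m v_2 + \left(\tfrac{d+m}{2}-k\right)v_2 L_{2k,\phi}^m v_1\right](dv_\phi^m)_h,
\end{equation*}
which is exactly \cref{21}.

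Finally, self-adjointness follows by the symmetry argument: the left-hand side of \cref{21} is symmetric under $u_1\leftrightarrow u_2$ (equivalently $v_1\leftrightarrow v_2$), so the right-hand side must be symmetric as well. Equating the right-hand side with its $v_1\leftrightarrow v_2$ swap gives
\begin{equation*}
\left(\tfrac{d+m}{2}+k\right)\int_M v_1 L_{2k,\phi}^m v_2 + \left(\tfrac{d+m}{2}-k\right)\int_M v_2 L_{2k,\phi}^m v_1
= \left(\tfrac{d+m}{2}+k\right)\int_M v_2 L_{2k,\phi}^m v_1 + \left(\tfrac{d+m}{2}-k\right)\int_M v_1 L_{2k,\phi}^m v_2,
\end{equation*}
and after cancellation the coefficient $2k \neq 0$ multiplies $\int_M\big(v_1 L_{2k,\phi}^m v_2 - v_2 L_{2k,\phi}^m v_1\big)(dv_\phi^m)_h$, forcing this difference to vanish; that is precisely formal self-adjointness of $L_{2k,\phi}^m$ with respect to $dv_\phi^m$. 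I expect the main obstacle to be the boundary-term bookkeeping in the second step: one must carefully expand both $u_i$ and the weighted measure $(dv_\phi^m)_{g_+}$ on $\{x=\epsilon\}$ to high enough order, isolate precisely the coefficient of $\log\epsilon$, and verify that the $\phi$-weighting (the $m/f\cdot f'$ and $e^{-\phi}$ contributions absent in the unweighted GJMS case) does not spoil the clean $(d+m)/2\pm k$ split. The even-in-$x$ structure of $(g_x,f_x)$ and the equivalence of the ambient and Poincaré extension problems established at the end of \cref{differential-equation-solution} are what keep this manageable.
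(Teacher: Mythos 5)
Your proposal follows essentially the same route as the paper: Green's identity reduces the $\log\epsilon$-coefficient to the boundary term at $x=\epsilon$ (the bulk and the $x=x_0$ boundary being absorbed into $O(1)$), the expansions of $u_i$ and of the weighted measure are substituted to extract the asymmetric pairing, and the symmetry of the energy integral then forces formal self-adjointness. One minor bookkeeping note: both coefficients $\tfrac{d+m}{2}\pm k$ arise from $\partial_r$ acting on the two factors of $u_2$ (the power $r^{(d+m)/2-k}$ versus the $r^{2k}\log r$ term), not from the measure expansion, but this does not change the argument.
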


\begin{proof}
For $(g,f)$, we may assumed without loss of generality~{\cites{CaseKhaitan2022} (Proposition 5.3)} that we have a normal weighted Poincar\'e metric, i.e. $(g_+,f_+)=(r^{-2}(dr^2+g_r),r^{-1}f_r)$. Green's identity gives
\begin{align*}
&\int_{\epsilon<r<r_{0}}\left[\left\langle d u_{1}, d u_{2}\right\rangle_{g_+}-s(d+m-s) u_{1} {u_{2}}\right] \;(d v_\phi^m)_{g_{+}} \\
&=-\epsilon^{1-d-m} \oint_{r=\epsilon} u_{1} \partial_{r} {u_{2}} \;(d v_\phi^m)_{g_{r}}+O(1).
\end{align*}

Substituting
\begin{align*}
u_i&=r^{\frac{d+m}{2}-k}\left(V_i+d_k L_{2k,\phi}^m v_i \,r^{2 k} \log r\right),\\
(d v_\phi^m)_{g_{r}}&=\left(1+(v_\phi^m)_2 r^{2}+(\text {even powers})+\ldots\right) (d v_\phi^m)_{g},
\end{align*}
and expanding shows that the coefficient of $\log \epsilon$ in the expansion of this expression is
$$
-d_k\int_{M}\left[\left(\frac{d+m}{2}+k\right) v_{1} { L_{2k,\phi}^m v_2}+\left(\frac{d+m}{2}-k\right) {v_{2}} L_{2k,\phi}^m v_1 \right] (d v_\phi^m)_{g}.
$$
The symmetry of the left-hand side of \cref{21} yields the final conclusion.
\end{proof}
\bibliographystyle{plain}
\bibliography{bib.bib}

@preamble { "\newcommand{\noopsort}[1]{} " }

@article{KHAITAN2026110787,
title = {The weighted ambient metric for manifolds with density},
journal = {Advances in Mathematics},
volume = {488},
pages = {110787},
year = {2026},
issn = {0001-8708},
doi = {https://doi.org/10.1016/j.aim.2026.110787},
url = {https://www.sciencedirect.com/science/article/pii/S0001870826000095},
author = {Ayush Khaitan},
keywords = {Ambient metric, Manifold with density, GJMS operator, –functional, –functional, Ricci flow},
abstract = {We prove the existence and uniqueness of a weighted analogue of the Fefferman-Graham ambient metric for manifolds with density. We then show that this ambient metric forms the natural geometric framework for the Ricci flow by constructing infinite families of fully non-linear analogues of Perelman's F and W functionals. We extend Perelman's monotonicity result to these two families of functionals under several conditions, including for shrinking solitons and Einstein manifolds. We do so by constructing a “Ricci flow vector field” in the ambient space, which may be of independent research interest. We also prove that the weighted GJMS operators associated with the weighted ambient metric are formally self-adjoint, and that the associated weighted renormalized volume coefficients are variational.}}

@article {MR3415769,
    AUTHOR = {Case, Jeffrey S.},
     TITLE = {A {Y}amabe-type problem on smooth metric measure spaces},
   JOURNAL = {J. Differential Geom.},
  FJOURNAL = {Journal of Differential Geometry},
    VOLUME = {101},
      YEAR = {2015},
    NUMBER = {3},
     PAGES = {467--505},
      ISSN = {0022-040X},
   MRCLASS = {53C23 (49Q20 53A30 58E11)},
  MRNUMBER = {3415769},
MRREVIEWER = {Hans-Bert Rademacher},
       URL = {http://projecteuclid.org/euclid.jdg/1445518921},
}

@article {CaffarelliSilvestre2007,
    AUTHOR = {Caffarelli, Luis and Silvestre, Luis},
     TITLE = {An extension problem related to the fractional {L}aplacian},
   JOURNAL = {Comm. Partial Differential Equations},
  FJOURNAL = {Communications in Partial Differential Equations},
    VOLUME = {32},
      YEAR = {2007},
    NUMBER = {7-9},
     PAGES = {1245--1260},
      ISSN = {0360-5302},
     CODEN = {CPDIDZ},
   MRCLASS = {35J70},
  MRNUMBER = {},
MRREVIEWER = {Francesco Petitta},
       DOI = {10.1080/03605300600987306},
       URL = {http://dx.doi.org/10.1080/03605300600987306},
}

@article {Case2011t,
    AUTHOR = {Case, Jeffrey S.},
     TITLE = {Smooth metric measure spaces, quasi-{E}instein metrics, and tractors},
   JOURNAL = {Cent. Eur. J. Math.},
  FJOURNAL = {Central European Journal of Mathematics},
    VOLUME = {10},
      YEAR = {2012},
    NUMBER = {5},
     PAGES = {1733--1762},
}

@article {Case2014s,
    AUTHOR = {Case, Jeffrey S.},
     TITLE = {The weighted {$\sigma_k$}-curvature of a smooth metric measure
              space},
   JOURNAL = {Pacific J. Math.},
  FJOURNAL = {Pacific Journal of Mathematics},
    VOLUME = {299},
      YEAR = {2019},
    NUMBER = {2},
     PAGES = {339--399},
      ISSN = {0030-8730},
   MRCLASS = {53C21 (35J60 53C23 58E11)},
  MRNUMBER = {3956145},
       DOI = {10.2140/pjm.2019.299.339},
       URL = {https://doi.org/10.2140/pjm.2019.299.339},
}

@article {Case2019tr,
    AUTHOR = {Case, Jeffrey S.},
     TITLE = {Sharp weighted {S}obolev trace inequalities and fractional
              powers of the {L}aplacian},
   JOURNAL = {J. Funct. Anal.},
  FJOURNAL = {Journal of Functional Analysis},
    VOLUME = {279},
      YEAR = {2020},
    NUMBER = {4},
     PAGES = {108567, 33},
      ISSN = {0022-1236},
   MRCLASS = {35J70 (35R11 53C18)},
  MRNUMBER = {4095805},
       DOI = {10.1016/j.jfa.2020.108567},
       URL = {https://doi.org/10.1016/j.jfa.2020.108567},
}

@article {CaseChang2013,
    AUTHOR = {Case, Jeffrey S. and Chang, Sun-Yung A.},
     TITLE = {On fractional {G}{J}{M}{S} operators},
   JOURNAL = {Comm. Pure Appl. Math.},
  FJOURNAL = {Communications on Pure and Applied Mathematics},
    VOLUME = {69},
    NUMBER = {6},
      YEAR = {2016},
     PAGES = {1017--1061},
}

@article {ChangGonzalez2011,
    AUTHOR = {Chang, Sun-Yung Alice and Gonz{\'a}lez, Mar{\'{\i}}a del Mar},
     TITLE = {Fractional {L}aplacian in conformal geometry},
   JOURNAL = {Adv. Math.},
  FJOURNAL = {Advances in Mathematics},
    VOLUME = {226},
      YEAR = {2011},
    NUMBER = {2},
     PAGES = {1410--1432},
      ISSN = {0001-8708},
     CODEN = {ADMTA4},
   MRCLASS = {58Jxx (53Cxx)},
  MRNUMBER = {2737789},
       DOI = {10.1016/j.aim.2010.07.016},
       URL = {http://dx.doi.org/10.1016/j.aim.2010.07.016},
}

@article {ChangYang2017,
    AUTHOR = {Chang, Sun Yung Alice and Yang, Ray A.},
     TITLE = {On a class of non-local operators in conformal geometry},
   JOURNAL = {Chin. Ann. Math. Ser. B},
  FJOURNAL = {Chinese Annals of Mathematics. Series B},
    VOLUME = {38},
      YEAR = {2017},
    NUMBER = {1},
     PAGES = {215--234},
      ISSN = {0252-9599},
   MRCLASS = {53A30 (35J60 58J05)},
  MRNUMBER = {3592161},
MRREVIEWER = {Gang Li},
       DOI = {10.1007/s11401-016-1068-z},
       URL = {https://doi.org/10.1007/s11401-016-1068-z},
}

@article {FeffermanGraham2002,
    AUTHOR = {Fefferman, Charles and Graham, C. Robin},
     TITLE = {{$Q$}-curvature and {P}oincar\'e metrics},
   JOURNAL = {Math. Res. Lett.},
  FJOURNAL = {Mathematical Research Letters},
    VOLUME = {9},
      YEAR = {2002},
    NUMBER = {2-3},
     PAGES = {139--151},
      ISSN = {1073-2780},
   MRCLASS = {53C21 (58J60)},
  MRNUMBER = {},
MRREVIEWER = {A. Rod Gover},
}

@book {FeffermanGraham2012,
    AUTHOR = {Fefferman, Charles and Graham, C. Robin},
     TITLE = {The ambient metric},
    SERIES = {Annals of Mathematics Studies},
    VOLUME = {178},
 PUBLISHER = {Princeton University Press},
   ADDRESS = {Princeton, NJ},
      YEAR = {2012},
     PAGES = {x+113},
      ISBN = {978-0-691-15313-1},
   MRCLASS = {53Axx},
  MRNUMBER = {2858236},
}

@article {FeffermanGraham2013,
    AUTHOR = {Fefferman, Charles and Graham, C. Robin},
     TITLE = {Juhl's formulae for {GJMS} operators and {$Q$}-curvatures},
   JOURNAL = {J. Amer. Math. Soc.},
  FJOURNAL = {Journal of the American Mathematical Society},
    VOLUME = {26},
      YEAR = {2013},
    NUMBER = {4},
     PAGES = {1191--1207},
      ISSN = {0894-0347},
   MRCLASS = {53A30 (53A55)},
  MRNUMBER = {3073887},
MRREVIEWER = {Fr{\'e}d{\'e}ric Robert},
       DOI = {10.1090/S0894-0347-2013-00765-1},
       URL = {http://dx.doi.org/10.1090/S0894-0347-2013-00765-1},
}

@article {Gover2006q,
    AUTHOR = {Gover, A. R.},
     TITLE = {Laplacian operators and {$Q$}-curvature on conformally
              {E}instein manifolds},
   JOURNAL = {Math. Ann.},
  FJOURNAL = {Mathematische Annalen},
    VOLUME = {336},
      YEAR = {2006},
    NUMBER = {2},
     PAGES = {311--334},
      ISSN = {0025-5831},
     CODEN = {MAANA},
   MRCLASS = {58J60 (53C20 53C21)},
  MRNUMBER = {},
MRREVIEWER = {Mohameden Ould Ahmedou},
       DOI = {10.1007/s00208-006-0004-z},
       URL = {http://dx.doi.org/10.1007/s00208-006-0004-z},
}

@article {GoverLeitner2008,
    AUTHOR = {Gover, A. Rod and Leitner, Felipe},
     TITLE = {A class of compact {P}oincar\'e-{E}instein manifolds:
              properties and construction},
   JOURNAL = {Commun. Contemp. Math.},
  FJOURNAL = {Communications in Contemporary Mathematics},
    VOLUME = {12},
      YEAR = {2010},
    NUMBER = {4},
     PAGES = {629--659},
      ISSN = {0219-1997},
   MRCLASS = {53Cxx},
  MRNUMBER = {2678944},
       DOI = {10.1142/S021919971000397X},
       URL = {http://dx.doi.org/10.1142/S021919971000397X},
}

@article {GoverLeitner2009,
    AUTHOR = {Gover, A. Rod and Leitner, F.},
     TITLE = {A sub-product construction of {P}oincar\'e-{E}instein metrics},
   JOURNAL = {Internat. J. Math.},
  FJOURNAL = {International Journal of Mathematics},
    VOLUME = {20},
      YEAR = {2009},
    NUMBER = {10},
     PAGES = {1263--1287},
      ISSN = {0129-167X},
   MRCLASS = {53C25 (53A30)},
  MRNUMBER = {},
MRREVIEWER = {Beno\^ {\i}t Kloeckner},
       DOI = {10.1142/S0129167X09005753},
       URL = {http://dx.doi.org/10.1142/S0129167X09005753},
}

@article {GJMS1992,
    AUTHOR = {Graham, C. Robin and Jenne, Ralph and Mason, Lionel J. and
              Sparling, George A. J.},
     TITLE = {Conformally invariant powers of the {L}aplacian. {I}.
              {E}xistence},
   JOURNAL = {J. London Math. Soc. (2)},
  FJOURNAL = {Journal of the London Mathematical Society. Second Series},
    VOLUME = {46},
      YEAR = {1992},
    NUMBER = {3},
     PAGES = {557--565},
      ISSN = {0024-6107},
     CODEN = {JLMSAK},
   MRCLASS = {},
  MRNUMBER = {},
MRREVIEWER = {Michael G. Eastwood},
}

@article {GrahamZworski2003,
    AUTHOR = {Graham, C. Robin and Zworski, Maciej},
     TITLE = {Scattering matrix in conformal geometry},
   JOURNAL = {Invent. Math.},
  FJOURNAL = {Inventiones Mathematicae},
    VOLUME = {152},
      YEAR = {2003},
    NUMBER = {1},
     PAGES = {89--118},
      ISSN = {0020-9910},
     CODEN = {INVMBH},
   MRCLASS = {58J50},
  MRNUMBER = {},
MRREVIEWER = {Andrew W. Hassell},
       DOI = {10.1007/s00222-002-0268-1},
       URL = {http://dx.doi.org/10.1007/s00222-002-0268-1},
}

@article {Juhl2013,
    AUTHOR = {Juhl, Andreas},
     TITLE = {Explicit formulas for {GJMS}-operators and {$Q$}-curvatures},
   JOURNAL = {Geom. Funct. Anal.},
  FJOURNAL = {Geometric and Functional Analysis},
    VOLUME = {23},
      YEAR = {2013},
    NUMBER = {4},
     PAGES = {1278--1370},
      ISSN = {1016-443X},
   MRCLASS = {53A30 (05A19 35R01 53A55 53B20 53C25 58J50)},
  MRNUMBER = {3077914},
MRREVIEWER = {A. Rod Gover},
       DOI = {10.1007/s00039-013-0232-9},
       URL = {http://dx.doi.org/10.1007/s00039-013-0232-9},
}

@article {Matsumoto2013,
    AUTHOR = {Matsumoto, Yoshihiko},
     TITLE = {A {GJMS} construction for 2-tensors and the second variation
              of the total {$Q$}-curvature},
   JOURNAL = {Pacific J. Math.},
  FJOURNAL = {Pacific Journal of Mathematics},
    VOLUME = {262},
      YEAR = {2013},
    NUMBER = {2},
     PAGES = {437--455},
      ISSN = {0030-8730},
   MRCLASS = {58J05 (31C12 53A30 53A55)},
  MRNUMBER = {3069069},
MRREVIEWER = {Josef \v{S}ilhan},
       DOI = {10.2140/pjm.2013.262.437},
       URL = {https://doi.org/10.2140/pjm.2013.262.437},
}

@article {CaseKhaitan2022,
    AUTHOR = {Case, Jeffrey S. and Khaitan, Ayush},
     TITLE = {The Weighted Ambient Metric},
   JOURNAL = {},
  FJOURNAL = {},
    VOLUME = {},
      YEAR = {},
    NUMBER = {},
     PAGES = {},
      ISSN = {},
     CODEN = {},
   MRCLASS = {},
  MRNUMBER = {},
MRREVIEWER = {},
NOTE= {arXiv:2202.11153v1}
}

@unpublished {Perelman1,
    AUTHOR = {Perelman, Grisha},
     TITLE = {The entropy formula for the {R}icci flow and its geometric
               applications},
      YEAR = {\noopsort{2097}preprint},
      NOTE = {arXiv:0211159},
}

\end{document}